\providecommand{\bysame}{\leavevmode\hbox to3em{\hrulefill}\thinspace}
\providecommand{\MR}{\relax\ifhmode\unskip\space\fi MR }
\providecommand{\href}[2]{#2}
\newcommand{\R}{\ensuremath{\mathbb{R}}}
\newcommand{\Exp}{\ensuremath{\mathbb{E}}}
\newcommand{\F}{\ensuremath{\mathcal{F}}}
\newcommand{\Sc}{\ensuremath{\mathcal{S}}}
\newcommand{\Lone}{\ensuremath{\mathcal{L}^1}}
\newcommand{\E}{\ensuremath{\mathcal{E}}}
\newcommand{\D}{\ensuremath{\mathscr{D}}}
\newcommand{\inpr}[3][]{\left\langle#2 \,,\, #3\right\rangle_{#1}}
\theoremstyle{plain}
\newtheorem{thm}{Theorem}[section]
\newtheorem{lem}[thm]{Lemma}
\newtheorem{propn}[thm]{Proposition}
\theoremstyle{definition}
\newtheorem{rem}[thm]{Remark}
\newtheorem{defn}[thm]{Definition}
\newtheorem{eg}[thm]{Example}
\numberwithin{equation}{section}
\begin{document}

\title[Gaussian flows]{Characterizing Gaussian flows arising from It\={o}'s 
stochastic differential equations}
\author{Suprio Bhar}
\date{}
\address{Suprio Bhar, Indian Statistical Institute Bangalore Centre.}
\subjclass[2010]{Primary: 60G15, 60H10, 60H15; Secondary: 35K15, 60H30}
\email{suprio@isibang.ac.in}
\keywords{Gaussian flows, Stochastic flows, Diffusion processes, Stochastic 
differential equations, Forward equations, Monotonicity 
inequality}
\begin{abstract}
We introduce and characterize a class of flows, which turn out to be Gaussian. 
This characterization allows us to show, using the Monotonicity inequality, 
that the transpose of the flow, for an extended class of initial conditions, is 
the unique solution of the SPDE introduced in Rajeev and Thangavelu (2008).
\end{abstract}

\maketitle
\section{Introduction}
It\={o}'s stochastic differential equations provide a concrete model for 
stochastic flows, on which topic there is a considerable 
literature (see \cite{MR866340, MR608026, MR774584, MR805125, MR517235, 
MR776981, MR876080, 
MR776984, MR1472487, MR540035, MR622556, MR685758, MR736036} and the 
references therein). In this paper, we study three 
interrelated properties of Gaussian flows arising as solutions of stochastic 
differential equations, viz.
\begin{equation}\label{sde}
dX_t = \sigma(X_t)\, dB_t + b(X_t)\, dt,\, t \geq 0.
\end{equation}
Let $L, A, L^\ast, A^\ast,X_t, Y_t$ be as in \cite{MR2373102}, with $r$ - the 
dimension of the Brownian motion there - equal to $d$. Let $\Sc(\R^d)$ 
be the space of real valued rapidly decreasing smooth functions on $\R^d$. Let 
$\Sc'(\R^d), \E'(\R^d)$ denote the space of tempered distributions on $\R^d$ 
and the space of compactly supported distributions on $\R^d$, respectively. 
Note 
that, for $\psi \in \E'(\R^d)$, $\{Y_t(\psi)\}$ satisfies 
\cite[equation (3.7)]{MR2373102}: a.s.
\[Y_t(\psi) = \psi + \int_0^t A^{\ast}(Y_s(\psi))\,.dB_s + 
\int_0^t L^{\ast}(Y_s(\psi))\,ds, \, \forall t \geq 0,\]
in some Hermite Sobolev space $\Sc_p(\R^d)$.\\
Firstly, we show that the Gaussian flows for
\begin{enumerate}
\item $\sigma$ is a real 
square matrix of order 
$d$,
\item $b(x):= \alpha + Cx,\, 
\forall x \in \R^d$ where $\alpha=(\alpha_1,\cdots,\alpha_d) \in \R^d$ and 
$C=(c_{ij})$ is a real 
square matrix of order $d$,
\end{enumerate}
correspond to those 
flows that depend 
`deterministically' on the initial condition (see 
Definition \ref{deterministic-dependence}). In particular, $\{X^0_t + 
e^{tC}x\}$ solves 
\eqref{sde}, where $\{X^x_t\}$ denotes the solution of \eqref{sde} with initial 
condition $X_0 = x$ (see Lemma \ref{soln-form}).\\
Secondly, as a consequence the map $x \mapsto \psi(X^x_t),\, 
\psi \in \Sc(\R^d)$ is in $\Sc(\R^d)$. The general question as to when this map 
(which is $C^\infty$ when $\sigma, b$ are smooth) is in $\Sc(\R^d)$ is to our 
knowledge, open. This property allows us to extend the map $Y_t = X^\ast_t$ 
defined on $\E'(\R^d)$ (see \cite[equation (3.3)]{MR2373102}), from 
$\E'(\R^d)\cap\Lone(\R^d) \to \E'(\R^d)$ to $\Lone(\R^d)\to \Sc'(\R^d)$ and 
that the processes $\{Y_t(\psi)\}, \psi 
\in 
\Lone(\R^d)$ satisfy the stochastic partial differential equation 
\cite[(3.7)]{MR2373102}. Taking expectation on both sides of this equation, we 
obtain the existence of solution of the Cauchy problem for $L^\ast$, viz.
\begin{equation}\label{Lstar-ivp}
\frac{d\psi(t)}{dt} = L^{\ast}\psi(t); \quad
\psi(0)=\psi
\end{equation}
Thirdly, we note that these flows are unique solutions of \cite[equation 
(3.7)]{MR2373102}. The uniqueness of solutions of these equations remains 
unresolved to date, to our knowledge. As observed 
in \cite{MR2373102}, the uniqueness follows from the Monotonicity inequality. 
This was proved for the pair of operators $(A^\ast, L^\ast)$ corresponding to 
Gaussian flows in our class, in \cite[Theorem 4.2 and Theorem 4.6]{dohs}. The 
same techniques also prove the uniqueness of the Cauchy problem for $L^\ast$.\\
In Section 2 of this paper we introduce the 
notion of flows depending deterministically on the initial condition (see 
Definition \ref{deterministic-dependence}) and prove characterization results 
(Theorem \ref{nasc-detr-init}, Proposition \ref{nasc-linear-init}). The 
results say 
that these type of 
diffusions correspond to the case when $\sigma$ is a constant and $b$ is in an 
affine form.\\
In Section 3, using the composition $x \mapsto \phi(X^x_t), \phi \in \Sc(\R^d)$ 
we define $\{Y_t(\psi)\}, \psi \in \Lone(\R^d)$ and using Monotonicity 
inequality (see Theorem \ref{AL-star-bnds}) show that it is the unique solution 
of a 
stochastic partial differential equation (see Theorem \ref{SDE-Y}). As an 
application 
of this result, we 
show $\psi(t) = \Exp\, Y_t(\psi)$ is the unique solution to 
\eqref{Lstar-ivp} with initial condition $\psi \in \Lone(\R^d)$ (see 
Theorem \ref{ivp-soln}).\\
In Proposition \ref{dist-Y}, we prove that
the tempered distribution $Y_t(\psi)$ is given by the 
integrable function $e^{-t\,tr(C)}\,\tau_{X(t,0)}\psi(e^{-tC}\cdot)$ 
where $tr(C)$ is the trace of the matrix $C$. This 
representation is similar to that obtained in \cite[Lemma 3.6]{MR3063763} for 
the solution of stochastic differential equations given by certain non-linear 
operators.

\section{Characterizing diffusions with the general solution depending 
deterministically on the initial 
condition}
In this section, we introduce a class of diffusions and prove some 
characterization results related to the flow of the diffusions. In particular, 
these flows turn out to be Gaussian flows. Let $(\Omega, \F, (\F_t), P)$ be a 
filtered complete probability space 
satisfying the usual conditions. Let $\{B_t\}$ be the standard $d$ 
dimensional $(\F_t)$ 
Brownian motion. Now consider the diffusion: 
\begin{equation}\label{diffusion-arbit-init}
dX_t = \sigma(X_t)\,dB_t+b(X_t)\,dt,\, \forall t \geq 0,
\end{equation}
where the 
coefficients $\sigma=(\sigma_{ij}), b=(b_i), 1 \leq i,j \leq d$ are Lipschitz 
continuous. Note that $\sigma, b$ satisfy a linear growth condition, i.e. 
there exists a constant $K > 0$ such that
\[|\sigma(x)|+|b(x)| \leq K(1+|x|),\, \forall x \in \R^d,\]
where $|\cdot|$ denotes the Euclidean norm in the appropriate spaces. For any 
$x \in \R^d$, let $\{X_t^x\}$ 
denote the solution of 
\eqref{diffusion-arbit-init} 
with 
$X_0=x$. We make a definition of the class of diffusions which we are 
interested in.
\begin{defn}\label{deterministic-dependence}
We say the general solution of the diffusion \eqref{diffusion-arbit-init} 
depends 
deterministically on the initial condition, if there exists a function 
$f:[0,\infty)\times \R^d\to\R^d$ such that for any $x \in \R^d$, we have a.s.
\begin{equation}\label{ddi}
X_t^x(\omega) = f(t,x)+X^0_t(\omega),\, t 
\geq 0.
\end{equation}
\end{defn}
Due to the linear growth of the coefficients $\sigma, b$, the first and second 
moments of $X^x_t$ exist for all $x,t$. If \eqref{ddi} is satisfied, then for 
all $x,t$ we have 
\[f(t,x) = \Exp X^x_t - \Exp X^0_t = x + \int_0^t \left[\Exp b(X^x_s) - \Exp 
b(X^0_s)\right]\, ds.\]
This observation gives the next result. The 
component functions of $f$ are denoted by $f_1,\cdots,f_d$.
\begin{lem}
If the general solution of the diffusion \eqref{diffusion-arbit-init} 
depends 
deterministically on the initial condition, then for all $(t,x)\in 
[0,\infty)\times
\R^d$, the partial derivative 
$\frac{\partial f}{\partial t}(t,x)$ exists and for every fixed $x\in \R^d$, 
the 
map $t \mapsto \frac{\partial f}{\partial t}(t,x)$ is continuous, where 
$\frac{\partial f}{\partial t}(t,x) = (\frac{\partial f_1}{\partial t}(t,x), 
\cdots, \frac{\partial f_d}{\partial t}(t,x))$.
\end{lem}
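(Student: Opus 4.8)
The plan is to work from the integral identity recorded just before the statement, namely
\[f(t,x) = x + \int_0^t \left[\Exp b(X^x_s) - \Exp b(X^0_s)\right]\, ds,\]
and then invoke the fundamental theorem of calculus. Set $g(s,x) := \Exp b(X^x_s) - \Exp b(X^0_s)$, so that $f(t,x) = x + \int_0^t g(s,x)\,ds$. It therefore suffices to show that, for each fixed $x \in \R^d$, the map $s \mapsto g(s,x)$ is continuous on $[0,\infty)$: once this is known, $t \mapsto f(t,x)$ is continuously differentiable with $\frac{\partial f}{\partial t}(t,x) = g(t,x)$ (one-sided at $t=0$), and the asserted continuity of $t \mapsto \frac{\partial f}{\partial t}(t,x)$ is then immediate.

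The core step is thus to prove that $s \mapsto \Exp b(X^x_s)$ is continuous, and I would do this by combining a.s. path continuity with a uniform integrability bound. Since $\sigma$ and $b$ are Lipschitz, \eqref{diffusion-arbit-init} has a pathwise unique solution with a.s. continuous trajectories; as $b$ is continuous, $b(X^x_s) \to b(X^x_{s_0})$ a.s. as $s \to s_0$. For uniform integrability I would use the classical second-moment estimate for It\={o} SDEs with linearly growing coefficients: for every $T > 0$ there is a constant $C_{T,x} < \infty$ with $\sup_{s \le T} \Exp|X^x_s|^2 \le C_{T,x}$. Combining this with $|b(y)| \le K(1+|y|)$ gives $\sup_{s \le T}\Exp|b(X^x_s)|^2 < \infty$, so $\{b(X^x_s) : s \le T\}$ is bounded in $\Ltwo$ and hence uniformly integrable. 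Vitali's convergence theorem then upgrades the a.s. convergence to convergence of expectations, i.e.\ $\Exp b(X^x_s) \to \Exp b(X^x_{s_0})$. Applying this both at the point $x$ and at $0$ shows that $s \mapsto g(s,x)$ is continuous.

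Finally, since $s \mapsto g(s,x)$ is continuous, and in particular locally bounded and Borel measurable, the function $t \mapsto \int_0^t g(s,x)\,ds$ is continuously differentiable on $[0,\infty)$ with derivative $g(t,x)$; hence $\frac{\partial f}{\partial t}(t,x)$ exists for every $(t,x) \in [0,\infty)\times\R^d$ and equals $g(t,x)$, which for fixed $x$ depends continuously on $t$. The only portion of the argument that requires genuine content is the continuity of $s \mapsto \Exp b(X^x_s)$, and even there the two ingredients — continuity of SDE sample paths and the uniform second-moment bound — are standard, so I do not expect a serious obstacle.
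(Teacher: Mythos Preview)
Your argument is correct and follows the same route the paper intends: the paper simply records the integral identity
\[
f(t,x) = x + \int_0^t \bigl[\Exp\, b(X^x_s) - \Exp\, b(X^0_s)\bigr]\,ds
\]
and remarks that ``this observation gives the next result,'' leaving the differentiation step and the continuity of the integrand implicit. You have merely made explicit the standard ingredients (path continuity, the uniform $\Ltwo$ moment bound from linear growth, and Vitali's theorem) that justify applying the fundamental theorem of calculus, so there is no divergence in approach.
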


In Theorem \ref{nasc-detr-init} and Proposition \ref{nasc-linear-init} we 
characterize diffusions 
depending deterministically on the initial condition. In the first result we 
obtain 
the characterization under a non-degeneracy condition on $\sigma$ and 
smoothness assumptions on certain derivatives of $b$ and $f$ and in the second, 
we 
consider the case when $f$ is in a product form.
\begin{thm}\label{nasc-detr-init}
Let $\sigma,b$ be Lipschitz continuous functions. Suppose 
the 
following happen:
\begin{enumerate}[label=(\roman*)]
\item there exists an
$x \in \R^d$ such that the determinant of $(\sigma_{ij}(x))$ is not zero,
\item $b_i \in C^2(\R^d,\R), i=1,\cdots,d$ where $b=(b_1,\cdots,b_d)$,
\item for every fixed $x\in 
\R^d$, 
the 
map $t \in [0,\infty) \mapsto \frac{\partial f}{\partial t}(t,x)$ is of 
bounded variation.
\end{enumerate}
Then the general solution of the diffusion \eqref{diffusion-arbit-init} depends 
deterministically on the initial condition through \eqref{ddi} 
if and only if $\sigma$ is 
a real non-singular matrix of order $d$ and $b$ is of the 
form $b(x) = 
\alpha + C x$ and $f(t,x) = e^{tC}x$ where 
$\alpha \in \R^d$ and $C$ is a real square matrix of order $d$.
\end{thm}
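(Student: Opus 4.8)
The plan is to prove the "only if" direction, since the "if" direction will follow from the explicit solution formula (Lemma \ref{soln-form} in the excerpt, giving $X^x_t = X^0_t + e^{tC}x$) together with the observation that $\{X^0_t + e^{tC}x\}$ indeed solves \eqref{diffusion-arbit-init} when $b(x) = \alpha + Cx$ and $\sigma$ is constant; one checks this by Itô's formula and uniqueness. So assume the general solution depends deterministically on the initial condition via \eqref{ddi}, i.e. $X^x_t = f(t,x) + X^0_t$ a.s. for all $x$.

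First I would exploit the preceding Lemma together with hypothesis (iii): for fixed $x$, $t \mapsto \frac{\partial f}{\partial t}(t,x)$ exists, is continuous, and is of bounded variation. From the integral identity recorded just before the Lemma, $\frac{\partial f}{\partial t}(t,x) = \Exp\,b(X^x_t) - \Exp\,b(X^0_t) = \Exp\,b(f(t,x)+X^0_t) - \Exp\,b(X^0_t)$. The key step is to differentiate the relation \eqref{ddi} in the spatial variable. Applying Itô's formula to $b_i(X^x_t)$ (using $b_i \in C^2$, hypothesis (ii)) and to $b_i(X^0_t)$, subtracting, and taking expectations, I would obtain an expression for $\frac{\partial^2 f_i}{\partial t^2}(t,x)$; matching the martingale parts and using that both processes are driven by the \emph{same} Brownian motion $B$ should force strong rigidity on $\sigma$. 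Concretely, since $X^x_t - X^0_t = f(t,x)$ is deterministic, the stochastic differentials must agree: $\sigma(X^x_t)\,dB_t = \sigma(X^0_t)\,dB_t$ a.s. for a.e. $t$, hence $\sigma(f(t,x)+X^0_t) = \sigma(X^0_t)$ a.s. Evaluating at $t=0$ and varying $x$ (and using that $X^0_t$ has full support near the non-degeneracy point $x$ from hypothesis (i), via a support/continuity argument) yields that $\sigma$ is constant, and non-singularity at one point then makes it a non-singular constant matrix.

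With $\sigma$ constant, \eqref{ddi} forces the drift to satisfy $b(f(t,x)+y) - b(y)$ to be a deterministic function of $(t,x)$ for all $y$ in the support, which by continuity of $b$ gives $b(x+z) - b(z)$ independent of $z$, i.e. $b(x) = b(0) + Cx$ for the matrix $C$ with $i$-th row $\nabla b_i(0)$; set $\alpha := b(0)$. Then $g(t,x) := \frac{\partial f}{\partial t}(t,x) = \Exp\,b(f(t,x)+X^0_t) - \Exp\,b(X^0_t) = C f(t,x)$, an autonomous linear ODE in $t$ with $f(0,x) = x$ (from \eqref{ddi} at $t=0$), whose unique solution is $f(t,x) = e^{tC}x$. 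The main obstacle I anticipate is the rigorous justification that $\sigma(f(t,x)+X^0_t) = \sigma(X^0_t)$ propagates to $\sigma$ being \emph{globally} constant: one needs the law of $X^0_t$ to charge a neighborhood of the good point $x$ from hypothesis (i), which requires a support theorem for the diffusion (or at least a local version using small-time asymptotics and continuity of $\sigma$), and then a connectedness/continuation argument in $\R^d$; handling the bounded-variation hypothesis (iii) carefully to legitimize the differentiation in $t$ is the other delicate point.
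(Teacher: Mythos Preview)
Your outline is broadly correct, but you have misidentified the main difficulty. The step you flag as the ``main obstacle'' --- propagating $\sigma(f(t,x)+X^0_t) = \sigma(X^0_t)$ to global constancy of $\sigma$ via a support theorem --- is not needed at all: at $t=0$ one has $X^0_0 = 0$ deterministically and $f(0,x)=x$, so the identity reads $\sigma(x)=\sigma(0)$ for every $x$, and you are done. The paper obtains this by writing $f(t,x)=X^x_t-X^0_t$ as a difference of two It\^o expressions, noting that $t\mapsto f(t,x)$ is a continuous bounded-variation process (it is $C^1$ in $t$ by the preceding Lemma), invoking the uniqueness of the continuous semimartingale decomposition to conclude that both the martingale and finite-variation parts vanish separately, and then reading off the quadratic variation of the martingale part at $t=0$.

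Where hypotheses (ii) and (iii) genuinely enter, and where the paper's argument differs from yours, is in showing $b$ is affine. From the vanishing of the finite-variation part one has $b(X^x_t)-b(X^0_t)=\frac{\partial f}{\partial t}(t,x)$ a.s.\ for all $t$. The paper applies It\^o's formula to each $b_i$ on the left-hand side (this needs $b_i\in C^2$, hypothesis (ii)); the right-hand side is of bounded variation by hypothesis (iii), so once more the semimartingale decomposition forces the martingale term on the left to vanish. Its quadratic variation at $t=0$ gives $\sigma^t(\nabla b_i(x)-\nabla b_i(0))=0$, and the already-established non-singularity of $\sigma$ forces each $\partial_j b_i$ to be constant, whence $b(x)=b(0)+Cx$. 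Your alternative --- once $\sigma$ is known to be a non-singular constant, use that $X^0_t$ has full support for $t>0$ to deduce $b(v+y)-b(y)$ is independent of $y$ --- is also viable and would in fact sidestep hypotheses (ii) and (iii); but you should present it as a clean separate argument rather than mixing it with the It\^o route, and you should make precise the passage from ``identity holds a.s.\ in $\omega$ for each fixed $t$'' to ``identity holds for every $y\in\R^d$'' (continuity of $b$ together with full support of the law of $X^0_t$ suffices).
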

\begin{proof}
First we prove the necessity part. For any $x \in 
\R^d$, a.s. $t 
\geq 0$
\begin{align*}
f(t,x) &= X_t^x-X_t^0\\
&=f(0,x) +\int_0^t\Big[\sigma(X_s^x)-\sigma(X_s^0)\Big].\,dB_s
+ \int_0^t \Big[b(X_s^x)-b(X_s^0)\Big]\,ds.
\end{align*}
Note that necessarily we must have $f(0,x)=x$. Now rewriting above relation
\[\int_0^t\left[\sigma(X_s^x)-\sigma(X_s^0)\right].\,dB_s+\int_0^t 
\left[b(X_s^x)-b(X_s^0)-\frac{\partial f}{\partial t}(s,x)\right]\,ds = 0.\]
Both the terms are $0$ a.s. (see \cite[Chapter 
III, Theorem 12]{MR2020294}). By looking at the continuous integrand of 
the quadratic variation of the martingale, we have 
for all $x 
\in \R^d, i,j = 1,\cdots,d$ 
a.s. $\left[\sigma_{ij}(X_t^x)-\sigma_{ij}(X_t^0)\right]^2 = 0,\, \forall 
t\geq 0$. Putting $t=0$ we have $\sigma_{ij}(x)=\sigma_{ij}(0),\,x \in \R^d$ 
i.e. 
$\sigma$ is a 
constant $d\times d$ matrix and by our assumption, its determinant 
is non-zero.\\
On the other hand, for each $x\in \R^d$, a.s. $t \geq 0$,
\begin{equation}\label{condn-b-gen-f}
b(X_t^x)-b(X_t^0)-\frac{\partial f}{\partial t}(t,x) = 0.
\end{equation}
Evaluating at $t=0$ 
yields $b_i(x) = b_i(0)+\frac{\partial f_i}{\partial t}(0,x),\, i=1,\cdots,d$.
Let $ \{B^{(i)}_t\}$ denote the $i$th 
component 
of $\{B_t\}$. Since 
$b_i \in C^2(\R^d,\R)$, by It\={o} formula and equation 
\eqref{condn-b-gen-f}, we have
\begin{equation}\label{2nd-level-b-condn}
\begin{split}
\frac{\partial f_i}{\partial t}(t,x) - \frac{\partial f_i}{\partial t}(0,x) 
&=\sum_{j,k=1}^d 
\int_0^t  
\big(\partial_j b_i(X^x_s)-\partial_j b_i(X^0_s)\big)\,\sigma_{jk} 
dB^{(k)}_s\\
&+ \sum_{j=1}^d \int_0^t \left[\partial_jb_i(X^x_s)b_j(X^x_s) - 
\partial_jb_i(X^0_s)b_j(X^0_s) \right]\, ds\\
&+ \frac{1}{2} \sum_{j,k = 1}^d \int_0^t (\sigma 
\sigma^t)_{jk}\left[\partial_j\partial_kb_i(X^x_s) - 
\partial_j\partial_kb_i(X^0_s)\right]\, ds.
\end{split}
\end{equation}
Again, the martingale term must be zero and therefore so is its quadratic 
variation. Then for any 
$i,k=1,\cdots,d$ we have
a.s. $\sum_{j=1}^d\sigma_{jk}^2\big(\partial_j b_i(X^x_t)-\partial_j 
b_i(X^0_t)\big)^2 = 
0,\, t \geq 0$. Evaluating at $t=0$ and simplifying we have
$\sum_{j=1}^d\sigma_{jk}(\partial_j b_i(x)-\partial_j b_i(0)) = 0$. The last 
equation we can write as
\[\sigma^t \begin{pmatrix}
\partial_1 b_i(x)-\partial_1 b_i(0)\\
\cdots\\
\partial_d b_i(x)-\partial_d b_i(0)
\end{pmatrix}
=0.\]
Since $\sigma$ is non-singular, for each $i,j=1,\cdots,d$ the function 
$x \mapsto \partial_j b_i(x)$ is a constant function. Define 
$c_{ij}:=\partial_j b_i(0)$ and write $C=(c_{ij})$. Then
\begin{align*}
b_i(x)-b_i(0)
&=\int_0^{x_1} \partial_1 b_i(y,x_2,\cdots,x_d)\,dy+\int_0^{x_2} \partial_1 
b_i(0,y,x_3,\cdots,x_d)\,dy\\
&+\cdots+\int_0^{x_d} 
\partial_d b_i(0,\cdots,0,y)\,dy=\sum_{j=1}^d c_{ij}x_j
\end{align*}
Now for any fixed $x \in \R^d$, we have $f(0,x)=x$ and
\[f(t,x)-x=\int_0^t 
\left[b(X_s^x)-b(X_s^0)\right]\,ds=\int_0^t 
C\left[X_s^x-X_s^0\right]\,ds=\int_0^t 
Cf(s,x)\,ds.\]
Since for each $x$, the map $t \mapsto f(t,x)$ is continuous, we have $f(t,x) = 
e^{tC}x$. It is easy to check that for above $\sigma, b, f$ the finite 
variation term in \eqref{2nd-level-b-condn} vanishes.\\
The converse part can be verified through direct computation.
\end{proof}
In Definition \ref{deterministic-dependence}, if the function $f$ is in a 
product form, 
then a similar characterization can be obtained without additional smoothness 
assumptions on $b, f$.
\begin{propn}\label{nasc-linear-init}
Let $\sigma,b$ be Lipschitz continuous functions.
\begin{enumerate}[label=(\roman*)]
\item Suppose the general solution of the diffusion 
\eqref{diffusion-arbit-init} 
depends 
deterministically on the initial condition, where the function $f$ has the 
decomposition $f(t,x) = g(t)h(x)$ with $g\in 
C^1([0,\infty),\R), 
h:\R^d \to \R^d$. Then $f(t,x) = \tilde g(t)x$ for some $\tilde g \in \D$ 
where
\[\D := \{g \in C^1([0,\infty),\R): g(0)=1\}.\]
\item The solution to \eqref{diffusion-arbit-init} depends deterministically on 
the initial 
condition in the following form: there exists $g \in \D$ such that for each $x 
\in \R^d$, a.s. $t \geq 0$
\begin{equation}\label{linear-init}
X_t^x = g(t)x + X_t^0,
\end{equation}
if and only if $\sigma$ is 
a constant $d\times d$ matrix, $b(x) = 
\alpha + \beta x$ and $g(t)=e^{\beta t},\, t \geq 0$ where 
$\alpha \in \R^d,\beta \in \R$. In this 
case, the solution has the form
\[X_t^x = 
\begin{cases}
e^{\beta t}x+\sigma\int_0^t e^{\beta(t-s)}\, dB_s 
+ \frac{e^{\beta t}-1}{\beta}\alpha,\,\text{if}\; \beta \neq 0\\
x+t\alpha + \sigma B_t,\,\text{if}\; \beta = 0.
\end{cases}
\]
\end{enumerate}
\end{propn}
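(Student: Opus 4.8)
The plan is to handle (i) by a short direct argument and then to establish (ii) by adapting the semimartingale-decomposition technique from the proof of Theorem~\ref{nasc-detr-init}; this becomes considerably simpler here because the product (resp.\ linear) structure is imposed from the outset, so neither the non-degeneracy of $\sigma$ nor the $C^2$ hypothesis on $b$ is needed.

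For (i), recall from the proof of Theorem~\ref{nasc-detr-init} that deterministic dependence forces $f(0,x)=x$ for every $x\in\R^d$. With $f(t,x)=g(t)h(x)$ this reads $g(0)h(x)=x$ for all $x$; evaluating at any $x\neq 0$ shows $g(0)\neq 0$, whence $h(x)=g(0)^{-1}x$. Then $\tilde g(t):=g(t)/g(0)$ satisfies $f(t,x)=\tilde g(t)x$, and $\tilde g$ is $C^1$ with $\tilde g(0)=1$, i.e.\ $\tilde g\in\D$. In particular the product-form case collapses to the linear form \eqref{linear-init}, so only (ii) carries genuine content.

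For the necessity in (ii), assume \eqref{linear-init} holds for some $g\in\D$. Subtracting the integral form of \eqref{diffusion-arbit-init} with initial condition $0$ from that with initial condition $x$, and using $g(t)x-x=\int_0^t g'(s)x\,ds$ (valid since $g\in C^1$, $g(0)=1$), we obtain a.s.
\[\int_0^t\big[\sigma(X_s^x)-\sigma(X_s^0)\big].\,dB_s+\int_0^t\big[b(X_s^x)-b(X_s^0)-g'(s)x\big]\,ds=0,\quad t\geq 0.\]
As in the proof of Theorem~\ref{nasc-detr-init}, uniqueness of the semimartingale decomposition (\cite[Chapter III, Theorem 12]{MR2020294}) forces both integrals to vanish identically a.s. Passing to the continuous integrand of the quadratic variation of the martingale part and evaluating at $t=0$ gives $\sigma_{ij}(x)=\sigma_{ij}(0)$ for all $x$, so $\sigma$ is a constant $d\times d$ matrix. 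The finite-variation integrand is continuous in $s$, hence $b(X_t^x)-b(X_t^0)=g'(t)x$ for all $t\geq 0$ a.s.; at $t=0$ this gives $b(x)=b(0)+g'(0)x$, i.e.\ $b(x)=\alpha+\beta x$ with $\alpha:=b(0)\in\R^d$ and $\beta:=g'(0)\in\R$. Substituting $b(X_t^x)-b(X_t^0)=\beta(X_t^x-X_t^0)=\beta g(t)x$ back into $b(X_t^x)-b(X_t^0)=g'(t)x$ yields $\beta g(t)x=g'(t)x$ for all $t$; taking any $x\neq 0$ gives $g'=\beta g$ with $g(0)=1$, so $g(t)=e^{\beta t}$.

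For the sufficiency in (ii) and the explicit form, take $\sigma$ a constant matrix and $b(x)=\alpha+\beta x$. If $\beta\neq 0$, set $X_t^x:=e^{\beta t}x+\sigma\int_0^t e^{\beta(t-s)}\,dB_s+\frac{e^{\beta t}-1}{\beta}\alpha$; writing the stochastic integral as $e^{\beta t}\int_0^t e^{-\beta s}\,dB_s$ and differentiating termwise, one checks $dX_t^x=(\alpha+\beta X_t^x)\,dt+\sigma\,dB_t$ and $X_0^x=x$, so by strong uniqueness (the coefficients are Lipschitz) this is the solution of \eqref{diffusion-arbit-init}; subtracting the $x=0$ formula gives $X_t^x-X_t^0=e^{\beta t}x$, which is \eqref{linear-init} with $g(t)=e^{\beta t}\in\D$. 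If $\beta=0$, then $b\equiv\alpha$ and $X_t^x:=x+t\alpha+\sigma B_t$ manifestly solves \eqref{diffusion-arbit-init} with $X_t^x-X_t^0=x$. I anticipate no serious obstacle: the only delicate points are the separate vanishing of the two semimartingale components — handled exactly as in Theorem~\ref{nasc-detr-init} — and isolating the $\beta=0$ case, which otherwise appears only as a removable singularity in the formula for $X_t^x$.
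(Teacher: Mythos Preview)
Your proof is correct and follows essentially the same approach as the paper: for (i) you use $f(0,x)=x$ to force $g(0)\neq 0$ and $h(x)=x/g(0)$, and for (ii) you invoke the semimartingale decomposition exactly as in Theorem~\ref{nasc-detr-init} to obtain $\sigma$ constant and the relation $b(X_t^x)-b(X_t^0)=g'(t)x$, then evaluate at $t=0$ and substitute back to derive the ODE $g'=\beta g$. The only difference is that you spell out the converse (the explicit solution formulas and their verification) in slightly more detail than the paper, which simply notes that it follows from direct computation.
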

\begin{proof}
Since a.s. $X_0^x=f(0,x)+X^0_0$, we have
$x=g(0)h(x),\,\forall x 
\in \R^d$. So  $g(0)\neq 0$. Without loss of 
generality, we may 
assume $g(0)=1$. Then $h(x) 
= x,f(t,x)=g(t)x$. This proves part (i).\\
If \eqref{linear-init} holds for some $g \in \D$, then as in 
Theorem \ref{nasc-detr-init}, we can show $\sigma$ is a 
constant $d\times d$ matrix. Using \eqref{condn-b-gen-f}, we have for all $x 
\in \R^d$
\begin{equation}\label{condn2-b-f}
\text{a.s.}\quad b(X_t^x)-b(X_t^0)-g'(t)x = 0,\, \forall t\geq 0.
\end{equation}
Putting $t=0$ we have for all $x \in \R^d$, 
$b(x) = b(0)+g'(0)x$. So $b$ is determined by 
the 
values $b(0),g'(0)$. Using this affine form of $b$ in \eqref{condn2-b-f} we 
get the differential equation
\begin{equation*}
g'(0)g(t) = g'(t);\, t \geq 0; \quad
g(0)=1.
\end{equation*}
The solution is given by $g(t) = 
e^{g'(0)t}, t\geq 0$.\\
The converse part can be verified through direct computation.
\end{proof}

In dimension $d = 1$, for convex functions we can apply the following 
generalization of It\={o} formula.
\begin{thm}[{\cite[Chapter VI, (1.1) 
Theorem]{MR1725357}}]
If $\{X_t\}$ is a continuous real valued semimartingale and $f:\R\to\R$ is a 
convex function, then there exists a continuous increasing process $\{A^f_t\}$ 
such that a.s. $t \geq 0$
\[f(X_t) = f(X_0) + \int_0^t f'_{-}(X_s)\, dX_s + \frac{1}{2}A^f_t\]
where $f'_{-}$ is the left-hand derivative of $f$.
\end{thm}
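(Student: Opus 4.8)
\emph{Sketch of proof.} This is the It\={o}--Tanaka formula, and the plan is to deduce it from the ordinary It\={o} formula by mollification. First I would record what convexity provides: $f$ is locally Lipschitz, the left derivative $f'_{-}$ exists at every point and is nondecreasing and left-continuous, and the distributional second derivative $f''$ is a nonnegative Radon measure on $\R$. Since $f'_{-}$ is Borel and locally bounded and $\{X_t\}$ is a continuous semimartingale, the process $s \mapsto f'_{-}(X_s)$ is progressively measurable and, after localization, bounded, so $\int_0^t f'_{-}(X_s)\,dX_s$ is a well-defined continuous semimartingale. Hence
\[A^f_t := f(X_t) - f(X_0) - \int_0^t f'_{-}(X_s)\,dX_s\]
is a well-defined continuous process with $A^f_0 = 0$, and it is the only process for which the asserted identity could hold; so everything reduces to showing that $t \mapsto A^f_t$ is a.s. nondecreasing.

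Next I would fix a nonnegative $C^\infty$ mollifier $\rho$ with $\int\rho = 1$ supported in $[0,1]$, set $\rho_n(y) := n\rho(ny)$, and put $f_n := f * \rho_n$. Each $f_n$ is then smooth and convex, $f_n \to f$ locally uniformly, $f_n'$ is locally uniformly bounded, $f_n'' \geq 0$, and --- because $\rho$ is supported to the right of the origin while $f'_{-}$ is left-continuous --- $f_n'(x) \to f'_{-}(x)$ for \emph{every} $x \in \R$. Applying the classical It\={o} formula to $f_n$ gives, a.s. for all $t \geq 0$,
\[A^{f_n}_t := f_n(X_t) - f_n(X_0) - \int_0^t f_n'(X_s)\,dX_s = \frac{1}{2} \int_0^t f_n''(X_s)\,d\langle X\rangle_s,\]
which is continuous, starts at $0$, and is nondecreasing in $t$ because $f_n'' \geq 0$.

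Finally I would let $n \to \infty$ after localizing at $\tau_k := \inf\{t \geq 0 : |X_t| \geq k\}$, so that $X$ stays in $[-k,k]$ on $[0,\tau_k]$; there $f_n'(X_s)$ is bounded uniformly in $n$ and converges pointwise to $f'_{-}(X_s)$. Writing $X = X_0 + M + V$ for its local martingale/finite variation decomposition, dominated convergence applied to $d|V|$ together with the It\={o} isometry and dominated convergence applied to $d\langle M\rangle$ (with a further localization of $\langle M\rangle$ if needed) give $\int_0^{t\wedge\tau_k} f_n'(X_s)\,dX_s \to \int_0^{t\wedge\tau_k} f'_{-}(X_s)\,dX_s$ in probability, uniformly for $t$ in compact sets, while $f_n(X_{t\wedge\tau_k}) \to f(X_{t\wedge\tau_k})$ and $f_n(X_0) \to f(X_0)$ by pointwise convergence of $f_n$. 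Hence $A^{f_n}_{t\wedge\tau_k} \to A^f_{t\wedge\tau_k}$ in probability, uniformly on compacts; passing to the limit in the inequality $A^{f_n}_{s\wedge\tau_k} \leq A^{f_n}_{t\wedge\tau_k}$ for rational $s \leq t$ and using the continuity of $A^f$ shows $A^f$ is a.s. nondecreasing on $[0,\tau_k]$, and letting $k \to \infty$ finishes the argument. The step I expect to be the main obstacle is exactly this passage to the limit inside the stochastic integral: one must guarantee $f_n' \to f'_{-}$ rather than, say, to $\frac{1}{2}(f'_{-} + f'_{+})$, which would disagree with $f'_{-}$ at the countably many jumps of $f'$ and would then force an occupation-times (local time) argument to handle those levels --- and this is precisely the reason for choosing a one-sided mollifier.
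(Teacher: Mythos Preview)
The paper does not prove this theorem at all --- it is quoted verbatim from Revuz--Yor as a black box, to be applied in the one-dimensional proposition that follows. There is thus nothing in the paper to compare your argument against. That said, your sketch is essentially the standard mollification proof (and is in fact close to what Revuz--Yor themselves do): the one-sided mollifier is precisely the device that forces $f_n' \to f'_{-}$ pointwise everywhere rather than to some other representative of the a.e.\ derivative, and your localization plus dominated convergence in the $dV$ and $d\langle M\rangle$ integrals is the usual route to passing to the limit in the stochastic integral. The argument is correct.
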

Using the previous theorem, we get the following version of 
Theorem \ref{nasc-detr-init}.
\begin{propn}
Let $\sigma,b$ be Lipschitz continuous functions on $\R$. 
Suppose 
the 
following happen:
\begin{enumerate}[label=(\roman*)]
\item there exists an
$x \in \R$ such that $\sigma(x)$ is not zero,
\item $b$ is continuously differentiable and is a finite linear combination of 
convex functions,
\item for every fixed $x\in 
\R^d$, 
the 
map $t \in [0,\infty) \mapsto \frac{\partial f}{\partial t}(t,x)$ is of 
bounded variation.
\end{enumerate}
Then the general solution of the diffusion \eqref{diffusion-arbit-init} depends 
deterministically on the initial condition through \eqref{ddi} 
if and only if $\sigma$ is 
a non-zero constant function and $b$ is of the 
form $b(x) = 
\alpha + C x$ and $f(t,x) = e^{tC}x$ where 
$\alpha, C \in \R$.
\end{propn}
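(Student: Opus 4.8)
The plan is to rerun the proof of Theorem \ref{nasc-detr-init} almost verbatim, replacing the one step that invokes the $C^2$ It\^o formula on $b$ by the generalized It\^o formula for convex functions stated just above. First I would fix $x \in \R$ and subtract the two instances of \eqref{diffusion-arbit-init} for $X^x$ and $X^0$: this gives $f(0,x) = x$ and, after the rearrangement used there,
\[ \int_0^t [\sigma(X^x_s) - \sigma(X^0_s)]\,dB_s + \int_0^t \Big[b(X^x_s) - b(X^0_s) - \frac{\partial f}{\partial t}(s,x)\Big]\,ds = 0 . \]
By \cite[Chapter III, Theorem 12]{MR2020294} the martingale term and the finite-variation term each vanish identically a.s. Inspecting the quadratic variation of the martingale term and evaluating at $t = 0$ shows $\sigma$ is a constant, non-zero by hypothesis (i); the vanishing of the finite-variation term gives, for each $x$, a.s. $b(X^x_t) - b(X^0_t) = \frac{\partial f}{\partial t}(t,x)$ for all $t \geq 0$, and at $t = 0$, $b(x) = b(0) + \frac{\partial f}{\partial t}(0,x)$.

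Next, by hypothesis (ii) I would write $b$ as a difference $b = c_1 - c_2$ of two convex functions (group the terms of the linear combination by the sign of their coefficients), apply the quoted generalized It\^o formula to $c_1$ and to $c_2$ along the continuous semimartingale $X^x$, and subtract; since $b \in C^1$, the left-hand derivatives of $c_1$ and $c_2$ combine to the continuous function $b'$, so
\[ b(X^x_t) = b(x) + \sigma \int_0^t b'(X^x_s)\,dB_s + \int_0^t b'(X^x_s)\,b(X^x_s)\,ds + \frac{1}{2}\big( A_t^{c_1}(x) - A_t^{c_2}(x) \big), \]
and likewise along $X^0$. Here $A_t^{c_1}(x) - A_t^{c_2}(x)$ is a difference of continuous increasing processes, hence a continuous process of bounded variation, so the local-martingale part of $b(X^x_t) - b(X^0_t)$ is $M_t := \sigma \int_0^t [b'(X^x_s) - b'(X^0_s)]\,dB_s$. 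But $b(X^x_\cdot) - b(X^0_\cdot) = \frac{\partial f}{\partial t}(\cdot, x)$, which is continuous and, by hypothesis (iii), of bounded variation; uniqueness of the semimartingale decomposition (again \cite[Chapter III, Theorem 12]{MR2020294}) then forces $M \equiv 0$, so $\langle M \rangle_t = \sigma^2 \int_0^t [b'(X^x_s) - b'(X^0_s)]^2\,ds = 0$ for all $t$ a.s. Since $\sigma \neq 0$ and $b'$ is continuous, $b'(X^x_t) = b'(X^0_t)$ for all $t$ a.s., and at $t = 0$ this reads $b'(x) = b'(0)$ for every $x \in \R$. Hence $b' \equiv C$ for a constant $C$, i.e. $b(x) = \alpha + C x$ with $\alpha := b(0)$.

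Finally, for fixed $x$, $f(t,x) - x = \int_0^t [b(X^x_s) - b(X^0_s)]\,ds = \int_0^t C f(s,x)\,ds$, and since $t \mapsto f(t,x)$ is continuous this gives $f(t,x) = e^{tC} x$; once $b$ is known to be affine it is in particular $C^2$, so the converse implication is already contained in the converse part of Theorem \ref{nasc-detr-init} (and can also be checked by direct computation). The main obstacle is the middle step: because $b$ is only assumed $C^1$, It\^o's formula applied to $b(X_t)$ produces a second-order term that is merely a continuous process of bounded variation rather than an absolutely continuous one, and the point is to confirm that this term lands entirely in the finite-variation part of $b(X^x_t) - b(X^0_t)$ and contributes nothing to its martingale part, so that extracting the martingale part and using hypothesis (iii) still yields $b'(X^x_t) = b'(X^0_t)$. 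The only genuinely new bookkeeping, compared with the smooth case, is writing $b$ as a difference of convex functions so that the quoted theorem applies and checking that the stochastic integrand it produces is exactly $\sigma\, b'$.
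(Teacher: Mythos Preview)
Your proposal is correct and is exactly the argument the paper has in mind: the paper does not spell out a proof of this proposition but simply points to the generalized It\^o formula for convex functions and says that one obtains ``the following version of Theorem \ref{nasc-detr-init}''. Your write-up supplies the omitted details---decomposing $b=c_1-c_2$ with $c_1,c_2$ convex, noting that $(c_1)'_- - (c_2)'_- = b'$ because $b\in C^1$, and then repeating the martingale/bounded-variation separation exactly as in the proof of Theorem \ref{nasc-detr-init}---and these details are all correct.
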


In the next proposition, we present an example where 
Definition \ref{deterministic-dependence} appears. Given a random field 
$(X^x_t, x \in 
\R^d, t \geq 0)$ in many situations 
it is reasonable to assume that the field can be decomposed as $X^x_t = Y^x_t + 
Z_t$, where $\{Y^x_t\}$ is a `local' component and $\{Z_t\}$ is a global 
component. We show that under certain conditions the `local' component has to 
be deterministic.
\begin{propn}\label{deterministic-local}
Suppose that $Z_t = X^0_t$ and that for all $x \in \R^d$, the field $Y^x_t = 
X^x_t - X^0_t$ is independent of $Z$. In addition assume that $\{X^x_t\}$ solves
\[dX_t = \sigma(X_t)\, dB_t + b(X_t)\, dt, t \geq 0;\, X_0 = x\]
and the 
sigma-fields generated by the processes $\{X^0_t\}$ and $\{B_t\}$ are the same. 
Then $\{Y^x_t\}$ 
is deterministic. \end{propn}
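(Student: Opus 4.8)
The plan is to show that, under the stated hypotheses, each random vector $Y^x_t$ is simultaneously measurable with respect to, and independent of, the $\sigma$-field generated by the driving Brownian motion; a random variable with that property is almost surely constant, and this constant depends only on $(t,x)$.

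First I would record that, since $\sigma$ and $b$ are Lipschitz, \eqref{diffusion-arbit-init} has pathwise uniqueness together with a strong solution. Hence the solution $X^x$ on the given filtered space coincides a.s.\ with a Borel-measurable functional $F_x(B)$ of the Brownian path; the same is true for $X^0$. Consequently $Y^x = X^x - X^0$ is measurable with respect to $\overline{\sigma(B_s : s \ge 0)}$, the $P$-completion of the $\sigma$-field generated by $\{B_t\}$. (The first and second moments of $X^x_t$, needed below, exist because of the linear growth of $\sigma, b$.)

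Next I would translate the independence hypothesis. Since $Z = X^0$, the assumption ``the $\sigma$-fields generated by $\{X^0_t\}$ and $\{B_t\}$ are the same'' reads $\sigma(Z_s : s \ge 0) = \sigma(B_s : s \ge 0)$, and the field $Y^x$ is assumed independent of $Z$, hence of $\sigma(B_s : s\ge 0)$, and therefore also of its completion $\overline{\sigma(B_s : s\ge 0)}$. Combining this with the previous step, for each fixed $t$ the vector $Y^x_t$ is $\overline{\sigma(B)}$-measurable and independent of $\overline{\sigma(B)}$, so it is independent of itself; thus its law is $\{0,1\}$-valued on every Borel set, i.e.\ it is a point mass, and $Y^x_t = f(t,x)$ a.s.\ with $f(t,x) := \Exp X^x_t - \Exp X^0_t$. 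Finally, since $t \mapsto X^x_t$ and $t \mapsto X^0_t$ have continuous paths, one null set serves for all $t$: argue on a countable dense set of times and extend by continuity. This gives, a.s., $Y^x_t = f(t,x)$ for all $t \ge 0$ with $f$ deterministic, which is the assertion — and in fact recovers the representation \eqref{ddi} of Definition \ref{deterministic-dependence}.

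The only step that requires real care is the first one: one must know that the solution $X^x$ is adapted not merely to the ambient filtration $(\F_t)$, which could carry extra randomness, but to the Brownian filtration itself, so that $Y^x$ truly becomes a functional of $B$. This is exactly where Lipschitz continuity is indispensable (via pathwise uniqueness and strong existence, i.e.\ the Yamada--Watanabe circle of ideas); everything after that is bookkeeping with measurability, independence, and path continuity.
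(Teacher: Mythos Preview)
Your proof is correct and takes essentially the same route as the paper: show that $Y^x$ is simultaneously measurable with respect to and independent of $\sigma(B_s:s\ge 0)$, hence a.s.\ constant. The paper's argument is a two-sentence version of yours that simply asserts $\{Y^x_t\}$ is ``adapted to the said sigma-field'' without justification; your explicit appeal to strong solutions (via Lipschitz coefficients and Yamada--Watanabe) to secure $\sigma(B)$-measurability, and the continuity argument to pass from a countable dense set of times to all $t$ on a single null set, fill in details the paper leaves implicit.
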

\begin{proof}
Under our hypothesis, $\{Y^x_t\}$ is both adapted to the said sigma-field and 
is 
independent of it. Hence $\{Y^x_t\}$ is deterministic.
\end{proof}
\begin{eg}
If $\sigma(x)$ is non-degenerate for all $x \in \R^d$, the 
sigma-fields generated by the processes $\{X^0_t\}$ and $\{B_t\}$ are the 
same. We present an example which shows that the local part may not be 
deterministic, if $\sigma(x)$ is degenerate for some $x$. Consider 
the stochastic 
differential equations in dimension one:
\[dX_t = x\, dB_t +(\alpha -X_t)\, dt;\, X_0 = \frac{x^2}{2},\]
where $\alpha$ is some fixed real number. The solution is given by
\[X^x_t = 
e^{-t}\frac{x^2}{2}+ x \int_0^t e^{-(t-s)}\, dB_s -\alpha(e^{-t} -1),\]
which is not of the form \eqref{ddi}, but the flow is Gaussian.
\end{eg}
In Proposition \ref{deterministic-local}, we can allow $\sigma, b$ to be 
random, but 
independent of $\{B_t\}$ and then the conclusion still holds, conditional on 
the $\sigma$-fields of $\sigma, b$. We take this to be in a product form in 
the next theorem.
\begin{thm}
Let $(\Omega',\F',P')$ be a complete probability space 
and $(\Omega'',\F'',(\F''_t),P'')$ a filtered complete probability space 
satisfying the usual conditions. Define $\Omega := \Omega' \times \Omega''$. 
Consider the filtered probability space $(\Omega, 
\F'\otimes\F'',(\F'\otimes\F''_t), P'\times P'')$. Let $\{B_t\}$ be an 
$(\F''_t)$ Brownian motion. Assume that $\F''_t = \sigma\{B_s : 0 \leq s \leq 
t\}$ and $\F'' = \sigma\{B_t : t \geq 0\}$. Let $b:\R^d\times\Omega \to 
\R^d$ be $\mathcal{B}(\R^d)\otimes\F'\otimes\F''_0/\mathcal{B}(\R^d)$ 
measurable 
and $\sigma:\R^d\times\Omega \to 
\R^{d\times d}$ be 
$\mathcal{B}(\R^d)\otimes\F'\otimes\F''_0/\mathcal{B}(\R^{d\times d})$ 
measurable, where $\mathcal{B}(\R^d)$ denotes the Borel sigma field on $\R^d$. 
Suppose that a unique strong solution to the following stochastic differential 
equation
\[dX_t = \sigma(X_t)\, dB_t + b(X_t)\, dt, t \geq 0;\, X_0 = x\]
exists for each $x \in \R^d$. Denote the solution by $\{X^x_t\}$. Suppose that
\begin{enumerate}
\item $\sigma\{B_t : t \geq 0\} = \sigma\{X_t^0 : t \geq 0\}$,
\item $\{X_t^x - X_t^0 : x \in \R^d, t \geq 0\}$ and $\{B_t : t \geq 0\}$ 
are independent.
\end{enumerate}
Then a.s. $\omega' \,(P')$, a.s. $\omega'' \,(P'')$ we have the process 
$\{X_t^x - X_t^0\}$ depends on $\omega'$ alone.
\end{thm}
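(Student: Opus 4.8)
My plan is to obtain the statement as a version of Proposition~\ref{deterministic-local} applied conditionally on $\omega' \in \Omega'$, after two preliminary reductions. First, since $\F''_t = \sigma\{B_s : 0 \leq s \leq t\}$ and $B_0 = 0$, the $\sigma$-field $\F''_0$ is $P''$-trivial; hence the coefficients $\sigma, b$, being $\mathcal{B}(\R^d) \otimes \F' \otimes \F''_0$-measurable, may be taken to depend only on $(x, \omega')$, so that for each fixed $\omega'$ the ``fiber'' equation $dX_t = \sigma(X_t, \omega')\,dB_t + b(X_t, \omega')\,dt$ on $(\Omega'', \F'', (\F''_t), P'')$ has non-random coefficients. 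Second, hypothesis~(1) says $\sigma\{X^0_t : t \geq 0\} = \sigma\{B_t : t \geq 0\}$; as every $B_t$ depends only on $\omega''$, this common $\sigma$-field lies in $\{\emptyset, \Omega'\} \otimes \F''$, so $X^0$ agrees a.s.\ with a process $\tilde X^0$ not depending on $\omega'$, and on $\Omega''$ one has $\sigma\{\tilde X^0_\cdot\} = \sigma\{B_\cdot\} = \F''$.

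Now fix $\omega' \in \Omega'$ outside a $P'$-null set and put $X^{x,\omega'}_\cdot := X^x_\cdot(\omega', \cdot)$, a process on $\Omega''$. Because $X^x$ is adapted to $(\F' \otimes \F''_t)$, each $X^{x,\omega'}_t$ is $\F''_t$-measurable, and by the previous paragraph $X^{0,\omega'}_\cdot = \tilde X^0_\cdot$; hence $Y^{x,\omega'}_t := X^{x,\omega'}_t - \tilde X^0_t$ is $\F''_t$-measurable, thus measurable with respect to $\sigma\{\tilde X^0_\cdot\}$. If, in addition, the field $\{Y^{x,\omega'}_\cdot : x \in \R^d\}$ is independent of $\F'' = \sigma\{B_\cdot\}$ under $P''$, then, being simultaneously $\F''$-measurable and independent of $\F''$, it is $P''$-a.s.\ constant; and this is exactly the assertion that, for $P'$-a.e.\ $\omega'$ and $P''$-a.e.\ $\omega''$, the field $\{X^x_t - X^0_t\}$ depends on $\omega'$ alone. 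A Fubini argument for It\={o} integrals shows moreover that $X^{x,\omega'}_\cdot$ is itself the strong solution of the fiber equation on $\Omega''$, which is the structural input needed in the remaining step.

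The main obstacle is that last ``in addition'': transferring the independence in hypothesis~(2) from $\Omega$ to the fibers. Unconditional independence of $\{X^x_\cdot - X^0_\cdot : x\}$ and $\{B_t\}$ does not by itself yield conditional (given $\F'$) independence, since a priori the differences could encode $\omega'$ through the path of $B$ while staying marginally independent of $B$. The way I would exclude this is to use hypothesis~(1) in full strength -- that $X^0 = \tilde X^0$ is a functional of $B$ alone and generates $\sigma\{B_\cdot\}$ -- together with the strong-solution property: expressing $X^{x,\omega'}_\cdot$ as a measurable functional of the driving path and disintegrating $\mathcal{L}_P(\{X^x_\cdot - X^0_\cdot\}_x,\, B)$ over the $B$-coordinate, the fact that $\mathcal{L}(X^0)$ is the same for every $\omega'$ should force, for $P'$-a.e.\ $\omega'$, the conditional law of $\{Y^{x,\omega'}_\cdot\}_x$ given $B$ to be degenerate, which is the desired fiberwise independence. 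Making this disintegration argument rigorous is where I expect the real work to lie; granting it, the proof is complete.
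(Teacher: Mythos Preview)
Your overall strategy coincides with the paper's: fix $\omega'$, observe that the fiber process $X^{x}_\cdot(\omega',\cdot)-X^{0}_\cdot(\omega',\cdot)$ is $\sigma\{B_\cdot\}$-measurable (via the strong-solution representation), and conclude it is $P''$-a.s.\ constant once you know it is also independent of $\sigma\{B_\cdot\}$ on $\Omega''$. Your two preliminary reductions (triviality of $\F''_0$, and $X^0$ depending on $\omega''$ alone) are correct and make the picture cleaner; the paper uses the second of these implicitly when it invokes condition~(i).

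Where you and the paper differ is precisely at the step you flag as the ``main obstacle''. The paper's proof opens with the bare assertion ``By condition~(ii), a.s.\ $\omega'\,(P')$, $\{B_t\}$ and $\{X^x_t(\omega',\cdot)-X^0_t(\omega',\cdot)\}$ are independent'', and then proceeds exactly as you do. No disintegration argument, no further use of~(i) or of the strong-solution structure is given to justify that passage from unconditional independence on $\Omega'\times\Omega''$ to fiberwise independence on $\Omega''$. Your worry is well founded: for a generic $\F'\otimes\F''$-measurable $Y$ with $B$ depending on $\omega''$ alone, independence of $Y$ and $B$ under $P'\times P''$ does \emph{not} force $Y(\omega',\cdot)$ to be independent of $B$ for $P'$-a.e.\ $\omega'$ (an XOR-type example on $\{0,1\}\times\{0,1\}$ already shows this). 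So the ``real work'' you anticipate is not carried out in the paper either; you have located exactly the point the published proof takes for granted. If you want a complete argument, the route you sketch---exploiting that $X^0$ is a fixed functional of $B$ generating $\F''$, together with the strong-solution functional $X^x=\Phi(x,\omega',B_\cdot)$, to force the conditional law of the differences given $B$ to be degenerate---is the natural one, but it is additional to what the paper supplies.
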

\begin{proof}
By condition $(ii)$, a.s. $\omega' \,(P')$, $\{B_t: t \geq 0\}$ and 
$\{X^x_t(\omega',\cdot) - X^0_t(\omega',\cdot): t \geq 0, x \in \R^d\}$ are 
independent.\\
Since $\{X^x_t\}$ is the strong solution of a stochastic differential equation, 
there exists a $P'$-null set $\mathcal{N}'\subset \Omega'$ such that for every 
$\omega \in \Omega' \setminus \mathcal{N}'$, a.s. 
$\omega'' \,(P'')$,
\[X_t^x(\omega', \omega'') = x + \left(\int_0^t \sigma(X_s)\, 
dB_s\right)(\omega', \omega'') + \int_0^t b(X_s(\omega', \omega''), \omega', 
\omega'')\, ds,\, t \geq 0.\]
Hence a.s. $\omega' \,(P')$, the random variables $X^x_t(\omega',\cdot), t \geq 
0, x \in \R^d$ are measurable with respect to $\sigma\{B_t : t \geq 0\}$ and by 
condition $(i)$, so are $X^x_t(\omega',\cdot) - 
X^0_t(\omega',\cdot), t \geq 0, x \in \R^d$.\\
Hence a.s. $\omega' \,(P')$, $X^x_t(\omega',\omega'') - 
X^0_t(\omega',\omega''), t \geq 0, x \in \R^d$ is deterministic in $\omega''$, 
i.e. the random variables depend on $\omega'$ alone.
\end{proof}

\begin{rem}
\begin{enumerate}
\item One may formulate and prove similar results for the following type of 
condition
\[X_t^{s,x}(\omega) = f(t,s,x)+X^{s,0}_t(\omega),\, t 
\geq s; X^{s,x}_s = x\]
for $s \geq 0, x \in \R^d$.
\item If equation \eqref{ddi} holds, then $f(t,x) = \Exp [X_t^x - 
X^0_t]$. As such the conditions on $f$ (in Theorem 
\ref{nasc-detr-init}, Proposition \ref{nasc-linear-init}) can be stated in 
terms of $\Exp X^x_t, x \in \R^d$.
\item Diffusions satisfying \eqref{ddi} also satisfy the 
following condition: for any $x,y \in \R^d$, a.s. $t \geq 0$
\[X^x_t - X^y_t = f(t,x) - f(t,y).\]
In certain situations such differences were shown to be diffusions (see 
\cite[Proposition 2.2]{MR2531088}).
\item Semimartingales with independent increments have been considered in 
\cite[Chapter II]{MR1943877}. In particular, it was shown that any rcll process 
with independent increments must be a sum of a semimartingale with independent 
increments and a deterministic part (\cite[Chapter II, 5.1 
Theorem]{MR1943877}). This is similar to \eqref{ddi}, but we are interested in 
the dependence of a possible deterministic part of the flows (generated by 
stochastic differential equations) on the initial condition.
\end{enumerate}

\end{rem}

\section[Probabilistic representations of solutions of the Forward 
Equations]{Probabilistic representations of the solutions of the 
Forward equations}
Let $(\Omega,\F,(\F_t),P)$ be a filtered complete probability space satisfying 
the usual conditions and let $\{B_t\}$ denote the standard $d$ 
dimensional $(\F_t)$ Brownian 
motion. Let $\Sc'(\R^d)$ be the space of tempered distributions, which is the 
dual of the space $\Sc(\R^d)$ of real valued rapidly decreasing smooth 
functions on $\R^d$. Let $\Lone(\R^d)$ denote the space of integrable 
functions. Note that $\Lone(\R^d) \subset \Sc'(\R^d)$.\\
We obtain probabilistic 
representations of the solutions of \eqref{Lstar-ivp} where
\begin{enumerate}[label=(\roman*)]
\item $\sigma$ is a real 
square matrix of order 
$d$ and $b(x):= \alpha + Cx,\, 
\forall x \in \R^d$ where $\alpha=(\alpha_1,\cdots,\alpha_d) \in \R^d$ and 
$C=(c_{ij})$ is a real 
square matrix of order $d$,
\item $\psi \in \Lone(\R^d)$. \end{enumerate}

Since $\sigma, b$ are $C^\infty$ functions with bounded derivatives, there 
exists a diffeomorphic modification of 
the solution of
\begin{equation}\label{diffusion}
dX_t= \sigma(X_t).dB_t + b(X_t)dt; \quad
X_0=x,
\end{equation}
(see \cite{MR1472487}, \cite[Theorem 
2.1]{MR2373102}). We first observe that such a modification 
can be written in an explicit form, with a null set 
independent of deterministic initial conditions of \eqref{diffusion}.
\begin{lem}\label{soln-form}
Let $\sigma, b$ be as above. Let $\{X(t,0)\}$ denote the solution of 
\eqref{diffusion} with 
initial condition $X_0 = 0$. Then a.s. for all $t \geq 0, x \in \R^d$
\begin{equation}\label{affine-solution}
X(t,0) + e^{tC}x = x + \int_0^t \sigma\, dB_s + \alpha t + \int_0^t 
C(X(s,0)+e^{sC}x)\,ds
\end{equation}
so that the sum $\{X(t,0) + e^{tC}x\}$ solves the stochastic differential 
equation
\[dX_t= \sigma(X_t).dB_t + b(X_t)dt; \quad
X_0=x.\]
\end{lem}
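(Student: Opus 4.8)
The plan is to reduce the claim to the deterministic identity $e^{tC}x = x + \int_0^t C e^{sC}x\,ds$, valid for every $x$ and $t$ with no exceptional set, and to add it to the integral equation already satisfied by $\{X(t,0)\}$. Concretely, I would first fix the continuous-in-$t$ modification of the solution $\{X(t,0)\}$ of \eqref{diffusion} with $X_0=0$, and record that there is a $P$-null set $N$ such that for every $\omega \notin N$ one has
\[
X(t,0) = \int_0^t \sigma\, dB_s + \alpha t + \int_0^t C\, X(s,0)\, ds \quad \text{for all } t \geq 0,
\]
where $\int_0^t \sigma\, dB_s = \sigma B_t$ since $\sigma$ is constant. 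Upgrading the ``a.s., for each fixed $t$'' statement to ``a.s., for all $t$'' is routine: every term is continuous in $t$, so agreement on a countable dense set of times --- a countable union of null sets --- forces agreement for all $t$ off a single null set $N$.

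Next I would use that $t \mapsto e^{tC}x$ is $C^1$ with derivative $C e^{tC}x$ and value $x$ at $t=0$, so that the fundamental theorem of calculus gives $e^{tC}x = x + \int_0^t C e^{sC}x\, ds$ for every $t \geq 0$ and every $x \in \R^d$. This identity carries no randomness, hence holds simultaneously for all $(t,x)$ and for every $\omega$.

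Finally, for $\omega \notin N$, adding the two displayed identities and using linearity of the stochastic and Lebesgue integrals gives, for all $t \geq 0$ and all $x \in \R^d$,
\begin{align*}
X(t,0) + e^{tC}x
&= x + \int_0^t \sigma\, dB_s + \alpha t + \int_0^t C\, X(s,0)\, ds + \int_0^t C e^{sC}x\, ds\\
&= x + \int_0^t \sigma\, dB_s + \alpha t + \int_0^t C\bigl(X(s,0) + e^{sC}x\bigr)\, ds,
\end{align*}
which is exactly \eqref{affine-solution}. Since $b(y) = \alpha + Cy$, the right-hand side equals $x + \int_0^t \sigma\, dB_s + \int_0^t b\bigl(X(s,0)+e^{sC}x\bigr)\, ds$, so $Z^x_t := X(t,0) + e^{tC}x$ satisfies $dZ^x_t = \sigma\, dB_t + b(Z^x_t)\, dt$ with $Z^x_0 = x$, and the exceptional set $N$ is independent of $x$.

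I do not expect a genuine obstacle here: the only delicate point is the uniformity of the null set in $x$, which is handled by isolating the single random identity (the integral equation for $\{X(t,0)\}$, whose exceptional set comes from path continuity) and keeping all $x$-dependence inside the deterministic exponential identity; adding a deterministic relation cannot enlarge the exceptional set. One should merely note that the chosen version of $\int_0^{\cdot} \sigma\, dB_s$ is continuous in $t$, which is immediate as it equals $\sigma B_{\cdot}$.
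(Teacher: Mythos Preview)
Your proof is correct and follows essentially the same approach as the paper: the paper's one-line proof rests on the observation that $\sigma$ is constant (so $\sigma(X(t,0)) = \sigma(X(t,0)+e^{tC}x) = \sigma$), and your argument is simply an explicit unpacking of that verification, splitting the identity into the integral equation for $\{X(t,0)\}$ plus the deterministic relation $e^{tC}x = x + \int_0^t Ce^{sC}x\,ds$. Your extra care with the uniformity of the null set in $x$ is a welcome detail that the paper leaves implicit.
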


\begin{proof}
The proof follows 
from the observation that $\sigma(X(t,0)) = \sigma(X(t,0) + 
e^{tC}x) = \sigma$ for any $t \geq 0, x \in \R^d$.
\end{proof}
For the case $\sigma = Id$ ($Id$ denotes the $d\times 
d$ identity 
matrix), $b(x)=-x$, we get the well-known Ornstein-Uhlenbeck diffusion.\\
Let $(\Sc_p(\R^d),\|\cdot\|_p), p \in \R$ be the Hermite Sobolev spaces (see 
\cite[Chapter 1.3]{MR771478}), which are real separable Hilbert spaces and are 
completions of $(\Sc(\R^d),\|\cdot\|_p)$.
Given $\psi \in \Sc(\R^d)$ (or $\Sc_p(\R^d)$) and $\phi \in \Sc'(\R^d)$
(or
$\Sc_{-p}(\R^d)$), the action of $\phi$ on $\psi$ will be denoted by
$\inpr{\phi}{\psi}$. For each $x$, let $\delta_x$ denote the Dirac 
distribution supported at the point 
$x$. The following result lists some properties of the Dirac 
distributions.
\begin{propn} \label{dirac-prpty}
\begin{enumerate}[label=(\roman*),ref=\ref{dirac-prpty}(\roman*)]
\item\label{dirac-limit} ({\cite[Theorem 
4.1]{MR2373102}}) Let $x \in \R^d$. Then $\delta_x \in 
\Sc_{-p}(\R^d)$ 
for any $p > \frac{d}{4}$. 
Further if $p > 
\frac{d}{4}$, then
$\lim_{|x|\to\infty}\|\delta_x\|_{-p} = 0$ and there exists a constant $R=R(p) > 
0$ 
such that 
$\|\delta_x\|_{-p} \leq R,\,\forall x \in \R^d$.
\item\label{delta-x-continuity} Let $p > \frac{d}{4}$. Then the map $x \in \R^d 
\mapsto \delta_x \in \Sc_{-p}(\R^d)$ is continuous.
\end{enumerate}
\end{propn}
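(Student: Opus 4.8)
The plan is to quote part~(i) from \cite[Theorem 4.1]{MR2373102} and to derive part~(ii) from it by a soft compactness argument that exploits the scale of Hermite Sobolev spaces, rather than by estimating $\|\delta_x-\delta_y\|_{-p}$ head-on.

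I would first record the facts about the spaces $\Sc_q(\R^d)$ that I need. Writing $\{h_n : n\in\N^d\}$ for the Hermite basis (eigenfunctions of the Hermite operator with eigenvalues $2|n|+d$), one has $\inpr{\delta_x}{h_n}=h_n(x)$ and $\|\delta_x\|_{-p}^2=\sum_n (2|n|+d)^{-2p}h_n(x)^2$; in particular $\Sc(\R^d)$ is dense in every $\Sc_q(\R^d)$, the pairing $\inpr{\delta_x}{\phi}=\phi(x)$ for $\phi\in\Sc(\R^d)$ is just evaluation, and for $q_1>q_2$ the inclusion $\Sc_{q_1}(\R^d)\hookrightarrow\Sc_{q_2}(\R^d)$ is compact, since in the natural orthonormal bases it is a diagonal operator on $\ell^2(\N^d)$ whose entries $(2|n|+d)^{q_2-q_1}$ tend to $0$.

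For part~(ii), fix $p>d/4$, fix $x\in\R^d$, and take any sequence $y_k\to x$. Pick $p'$ with $d/4<p'<p$. Since $p'>d/4$, part~(i) applies with the index $p'$ and gives a constant $R'=R(p')$ with $\|\delta_{y_k}\|_{-p'}\le R'$ for all $k$; thus $\{\delta_{y_k}\}$ is bounded in $\Sc_{-p'}(\R^d)$. For each $\phi\in\Sc(\R^d)$, which forms a dense subset of $\Sc_{p'}(\R^d)$, we have $\inpr{\delta_{y_k}}{\phi}=\phi(y_k)\to\phi(x)=\inpr{\delta_x}{\phi}$ by continuity of $\phi$; boundedness in $\Sc_{-p'}(\R^d)$ together with $\|\delta_x\|_{-p'}\le R'$ then upgrades this to weak convergence $\delta_{y_k}\rightharpoonup\delta_x$ in $\Sc_{-p'}(\R^d)$. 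Finally, because the inclusion $\Sc_{-p'}(\R^d)\hookrightarrow\Sc_{-p}(\R^d)$ is compact and compact operators carry weakly convergent sequences to norm-convergent ones, $\delta_{y_k}\to\delta_x$ in $\Sc_{-p}(\R^d)$. Since $y_k\to x$ was arbitrary and these are metric spaces, $x\mapsto\delta_x$ is continuous from $\R^d$ into $\Sc_{-p}(\R^d)$.

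The substantive point is the detour through the smaller index $p'$ and the compact inclusion. A direct bound on $\|\delta_x-\delta_y\|_{-p}^2=\sum_n(2|n|+d)^{-2p}(h_n(x)-h_n(y))^2$ would require splitting off a tail $\sum_{|n|>N}(2|n|+d)^{-2p}h_n(z)^2$ that is small uniformly for $z$ near $x$; obtaining such a uniform tail estimate amounts either to redoing the delicate Hermite-function estimates behind \cite[Theorem 4.1]{MR2373102}, or to a Dini argument that presupposes continuity of $z\mapsto\|\delta_z\|_{-p}^2$ — exactly what is being proved. Working in $\Sc_{-p'}(\R^d)$ with $d/4<p'<p$ (still within the range covered by part~(i)) and then transferring to $\Sc_{-p}(\R^d)$ by compactness sidesteps the circularity; alternatively one can run the same argument along subsequences, extracting from every subsequence of $\{\delta_{y_k}\}$ a weakly convergent-in-$\Sc_{-p'}$, hence norm-convergent-in-$\Sc_{-p}$, further subsequence, identifying its limit as $\delta_x$ by testing against $\Sc(\R^d)$, and invoking the subsequence principle.
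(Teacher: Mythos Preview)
Your proof of part~(ii) is correct but follows a genuinely different route from the paper. The paper simply observes that $\delta_x = \tau_x\delta_0$, where $\tau_x$ is the translation operator, and invokes the continuity of the map $x \mapsto \tau_x\phi$ in $\Sc_q(\R^d)$ for any fixed $\phi \in \Sc_q(\R^d)$, quoted from \cite[Proposition 3.1]{MR2373102}; since $\delta_0 \in \Sc_{-p}(\R^d)$ by part~(i), the conclusion is immediate. Your argument instead interposes an auxiliary index $p' \in (d/4, p)$, uses the uniform bound from part~(i) at that index together with testing against the dense set $\Sc(\R^d)$ to obtain weak convergence $\delta_{y_k}\rightharpoonup\delta_x$ in $\Sc_{-p'}(\R^d)$, and then upgrades to norm convergence in $\Sc_{-p}(\R^d)$ via the compact embedding $\Sc_{-p'}(\R^d)\hookrightarrow\Sc_{-p}(\R^d)$. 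The paper's proof is shorter and exploits the group structure of translations, at the cost of importing an external result; your weak-to-strong-via-compact-embedding approach is essentially self-contained (relying only on part~(i) and the Hilbert scale structure) and would apply equally well to any bounded family in $\Sc_{-p'}(\R^d)$ for which one can identify the weak limit on a dense set, not just to translates of a fixed distribution.
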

\begin{proof}
Only part $(ii)$ requires a proof. In 
\cite[Proposition 
3.1]{MR2373102} it was shown that the map $x \in \R^d \mapsto \tau_x\phi \in 
\Sc_q(\R^d)$ is continuous where
\begin{enumerate}[label=(\alph*)]
\item $\phi \in \Sc_q(\R^d)$ for some real number $q$,
\item $\tau_x:\Sc'(\R^d)\to \Sc'(\R^d), x \in \R^d$ are translation operators, 
which on functions act as follows: $(\tau_xf)(y):=f(y-x),\, y \in \R^d$.
\end{enumerate}
But $\delta_0 \in 
\Sc_{-p}(\R^d)$ (by part $(i)$) and $\tau_x \delta_0 = 
\delta_x$. Hence the required continuity follows.
\end{proof}

\begin{lem}\label{Lone-and-Scp} Let $p > \frac{d}{4}$. Then
$\Lone(\R^d)\subset \Sc_{-p}(\R^d)$. 
\end{lem}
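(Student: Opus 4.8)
The claim $\Lone(\R^d)\subset\Sc_{-p}(\R^d)$ for $p>\frac d4$ should follow by duality from a sup-norm bound on the Hermite functions, together with the characterization of $\Sc_{-p}$ as those tempered distributions whose pairing against Hermite functions is square-summable with the appropriate weights. Concretely, write $\{h_k : k\in\N^d\}$ for the (normalized) Hermite basis of $\Ltwo(\R^d)$; these are eigenfunctions of the Hermite operator with eigenvalues $2|k|+d$, and $\Sc_p(\R^d)=\{\phi : \sum_k (2|k|+d)^{2p}|\inpr{\phi}{h_k}|^2<\infty\}$ with the obvious norm. For $\psi\in\Lone(\R^d)$ one has $\inpr{\psi}{h_k}=\int_{\R^d}\psi(y)h_k(y)\,dy$, so $|\inpr{\psi}{h_k}|\le \|\psi\|_{\Lone}\,\|h_k\|_\infty$.

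The plan is therefore: first, recall (from \cite{MR771478}, Chapter 1.3) the classical estimate $\|h_k\|_\infty \le c\,(2|k|+d)^{m}$ for some fixed exponent $m$ depending only on $d$ — in one variable the sup norm of the $n$-th Hermite function grows polynomially, and the $d$-dimensional functions are products, so one gets a bound polynomial in $|k|$. Second, combine this with the $\Lone$ bound above to get $|\inpr{\psi}{h_k}|^2 \le C\,\|\psi\|_{\Lone}^2\,(2|k|+d)^{2m}$. Third, compute $\|\psi\|_{-p}^2 = \sum_{k}(2|k|+d)^{-2p}|\inpr{\psi}{h_k}|^2 \le C\,\|\psi\|_{\Lone}^2 \sum_{k\in\N^d}(2|k|+d)^{2m-2p}$, and observe that the series $\sum_{k\in\N^d}(2|k|+d)^{-2s}$ converges precisely when $2s>d$, i.e. when $s>\frac d2$. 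So taking $p$ large enough (namely $p>m+\frac d4$) makes the sum finite and shows $\psi\in\Sc_{-p}$ with $\|\psi\|_{-p}\le C'\|\psi\|_{\Lone}$.

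The subtlety is that the statement asks for all $p>\frac d4$, not merely all sufficiently large $p$; but this is immediate once one has it for one value, since $\Sc_{-q}\subset\Sc_{-p}$ (continuously) whenever $q\le p$, so membership for some large $p$ and the nesting of the spaces gives membership — and the continuous inclusion $\Lone\hookrightarrow\Sc_{-p}$ — for every $p>\frac d4$. An alternative, perhaps cleaner, route is to invoke \Cref{dirac-limit}: since $\delta_x\in\Sc_{-p}(\R^d)$ with $\|\delta_x\|_{-p}\le R$ uniformly in $x$ for $p>\frac d4$, and $x\mapsto\delta_x$ is continuous (\Cref{delta-x-continuity}) hence Bochner measurable, one can represent $\psi\in\Lone(\R^d)$ as the $\Sc_{-p}$-valued Bochner integral $\psi=\int_{\R^d}\psi(x)\,\delta_x\,dx$, which converges absolutely because $\int_{\R^d}|\psi(x)|\,\|\delta_x\|_{-p}\,dx\le R\,\|\psi\|_{\Lone}<\infty$; identifying this Bochner integral with the distributional action $\phi\mapsto\int\psi(x)\phi(x)\,dx$ on test functions $\phi\in\Sc(\R^d)$ then shows the integral equals $\psi$ as an element of $\Sc_{-p}(\R^d)$.

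The main obstacle is the bookkeeping in the first approach — getting the right polynomial growth exponent $m$ for the Hermite sup-norms uniformly in dimension and checking the resulting exponent $2m-2p$ lands in the summable range — but if one instead uses the Bochner-integral argument via \Cref{dirac-prpty}, the only point requiring care is the routine verification that the Bochner integral $\int\psi(x)\delta_x\,dx$, tested against $\phi\in\Sc(\R^d)$, reproduces $\int\psi\phi$, which follows from continuity of the evaluation functional $\inpr{\cdot}{\phi}$ on $\Sc_{-p}(\R^d)$ and Fubini. I would present the Bochner-integral proof as the main argument, since it reuses \Cref{dirac-prpty} and sidesteps explicit Hermite estimates entirely.
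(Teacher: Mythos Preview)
Your Bochner-integral argument---representing $\psi$ as $\int_{\R^d}\psi(x)\,\delta_x\,dx$ in $\Sc_{-p}(\R^d)$ via the uniform bound $\|\delta_x\|_{-p}\le R$ from \Cref{dirac-limit}---is exactly the paper's proof.

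Your first approach, however, has a genuine gap in its final step. The inclusion $\Sc_{-q}\subset\Sc_{-p}$ for $q\le p$ is correct, but it runs the wrong way for your purpose: proving $\psi\in\Sc_{-p_0}$ for some \emph{large} $p_0$ is a \emph{weaker} statement than $\psi\in\Sc_{-p}$ for $p$ close to $\frac d4$, since then $\Sc_{-p}\subset\Sc_{-p_0}$, not the reverse. The nesting cannot upgrade membership from a larger space to a smaller one. Moreover, even with the optimal sup-norm bound (Cram\'er's inequality gives $\|h_k\|_\infty\le C$ uniformly in $k$, i.e.\ $m=0$ in your notation), the resulting series $\sum_{k\in\mathbb{Z}_+^d}(2|k|+d)^{-2p}$ converges only for $p>\frac d2$, since there are $\sim n^{d-1}$ multi-indices with $|k|=n$. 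The sharper threshold $\frac d4$ requires the estimate $\sum_{|k|=n}h_k(x)^2=O(n^{d/2-1})$ uniformly in $x$, which is precisely what underlies \Cref{dirac-limit}; the crude bound $|\inpr{\psi}{h_k}|\le\|\psi\|_{\Lone}\|h_k\|_\infty$ discards this and cannot reach $\frac d4$. So the first route, as written, does not establish the lemma for the full range $p>\frac d4$.
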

\begin{proof}
For $\psi \in 
\Lone(\R^d)$, $\int_{\R^d}|\psi(x)|.\|\delta_x\|_{-p} \,dx \leq R 
\int_{\R^d}|\psi(x)|\,dx < \infty$,
where $R= R(p)$ is the constant given by Proposition \ref{dirac-limit}. Hence 
$\int_{\R^d}\psi(x)\delta_x \,dx$ is a well-defined element of 
$\Sc_{-p}(\R^d)$. It can be checked that as a tempered distribution 
$\psi = \int_{\R^d}\psi(x)\delta_x \,dx$ 
and hence $\psi \in \Sc_{-p}(\R^d)$.
\end{proof}
In what follows, $\{X(t,x)\}$ and $\mathcal{N}$ will denote the solution and 
the null set mentioned in 
Lemma \ref{soln-form} respectively. As in \cite[equation (3.3)]{MR2373102}), 
for any 
$\psi \in \Lone(\R^d)$ we define
\begin{equation}\label{defn-Y}
Y_t(\omega)(\psi):= \int_{\R^d} \psi(x)\delta_{X(t,x,\omega)} \,dx,\, \omega 
\in 
\Omega\setminus\mathcal{N}
\end{equation}
and set $Y_t(\omega)(\psi) := 0$, if $\omega\in\mathcal{N}$.
\begin{propn}
Let $\psi, \{X(t,x)\}, \{Y_t(\psi\})$ be as above. Let $p > \frac{d}{4}$. Then 
$\{Y_t(\psi)\}$ is an $(\F_t)$ adapted $\Sc_{-p}(\R^d)$ valued continuous 
process. Furthermore, $Y_t(\psi)$ is norm-bounded, where the bound can be 
chosen to be independent of $t$.
\end{propn}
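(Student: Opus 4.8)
The plan is to work off the explicit representation $X(t,x) = X(t,0) + e^{tC}x$ from Lemma \ref{soln-form}, which holds for every $x$ simultaneously off the single null set $\mathcal{N}$, and to combine it with the uniform bound $\|\delta_x\|_{-p}\le R$ and the continuity of $x\mapsto\delta_x$ provided by Proposition \ref{dirac-limit} and Proposition \ref{delta-x-continuity}. First I would check that, for each fixed $t$ and each $\omega\notin\mathcal{N}$, the integrand $x\mapsto\psi(x)\,\delta_{X(t,x,\omega)}$ is Bochner integrable in $\Sc_{-p}(\R^d)$: the map $x\mapsto X(t,x,\omega)$ is continuous, hence $x\mapsto\delta_{X(t,x,\omega)}$ is continuous by Proposition \ref{delta-x-continuity}, so (together with measurability of $\psi$) the integrand is strongly measurable, and $\int_{\R^d}|\psi(x)|\,\|\delta_{X(t,x,\omega)}\|_{-p}\,dx\le R\int_{\R^d}|\psi(x)|\,dx<\infty$. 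This makes $Y_t(\omega)(\psi)$ a well-defined element of $\Sc_{-p}(\R^d)$ and simultaneously gives the norm bound $\|Y_t(\omega)(\psi)\|_{-p}\le R\int_{\R^d}|\psi(x)|\,dx$ for all $t\ge0$ and all $\omega$ (trivially on $\mathcal{N}$), a bound independent of both $t$ and $\omega$.

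For adaptedness, since $\Sc_{-p}(\R^d)$ is separable, by the Pettis measurability theorem it suffices to show that $\omega\mapsto\inpr{Y_t(\omega)(\psi)}{\phi}$ is $\F_t$-measurable for $\phi$ ranging over a dense subset of the dual; I would take $\phi\in\Sc(\R^d)$. Since continuous linear functionals commute with the Bochner integral, $\inpr{Y_t(\omega)(\psi)}{\phi}=\int_{\R^d}\psi(x)\,\phi(X(t,x,\omega))\,dx$. For fixed $x$, the random variable $\omega\mapsto X(t,x,\omega)$ is $\F_t$-measurable (the solution of \eqref{diffusion} is adapted and $X(t,x)=X(t,0)+e^{tC}x$), while $x\mapsto X(t,x,\omega)$ is continuous; hence $(x,\omega)\mapsto\psi(x)\phi(X(t,x,\omega))$ is a Carathéodory map and thus jointly measurable, and it is dominated by $\|\phi\|_\infty\,|\psi(x)|\in\Lone(\R^d)$, so Fubini's theorem yields $\F_t$-measurability of $\omega\mapsto\int_{\R^d}\psi(x)\phi(X(t,x,\omega))\,dx$.

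For path continuity, fix $\omega\notin\mathcal{N}$ (on $\mathcal{N}$ the path is identically $0$, hence continuous). As $s\to t$ we have $X(s,x,\omega)\to X(t,x,\omega)$ for every $x$, because $s\mapsto X(s,0,\omega)$ is continuous and $s\mapsto e^{sC}x$ is continuous, and the exceptional set in Lemma \ref{soln-form} does not depend on $x$. By Proposition \ref{delta-x-continuity}, $\|\delta_{X(s,x,\omega)}-\delta_{X(t,x,\omega)}\|_{-p}\to0$ for each $x$, and this quantity is bounded by $2R$, so dominated convergence gives $\|Y_s(\omega)(\psi)-Y_t(\omega)(\psi)\|_{-p}\le\int_{\R^d}|\psi(x)|\,\|\delta_{X(s,x,\omega)}-\delta_{X(t,x,\omega)}\|_{-p}\,dx\to0$, i.e.\ $t\mapsto Y_t(\omega)(\psi)$ is continuous in $\Sc_{-p}(\R^d)$.

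I do not expect any single estimate to be the obstacle; rather, the point requiring care is the bookkeeping that lets all three arguments run simultaneously over all $x$ — this is exactly where the $x$-independence of the null set $\mathcal{N}$ from Lemma \ref{soln-form} is essential — together with the routine but necessary verifications that the Bochner integral interacts correctly with the filtration (via Pettis plus Fubini) and with the limit in $t$ (via dominated convergence). Without the uniform null set one would face a union of uncountably many null sets in both the adaptedness and the continuity steps.
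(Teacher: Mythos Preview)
Your proposal is correct and follows essentially the same route as the paper's proof: well-definedness and the $t$-independent norm bound come from the uniform bound $\|\delta_x\|_{-p}\le R$, adaptedness is inherited from that of $\{X(t,x)\}$, and path continuity follows from continuity of $x\mapsto\delta_x$ together with dominated convergence. You supply considerably more detail than the paper does---the Bochner/strong-measurability check, the Pettis-plus-Fubini justification for adaptedness, and the explicit role of the uniform null set $\mathcal{N}$---but these are exactly the details the paper's short argument is implicitly relying on.
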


\begin{proof}
Since $\psi \in \Lone(\R^d)$ and $\delta_x, x \in \R^d$ are uniformly bounded 
in $\Sc_{-p}(\R^d)$ (Proposition \ref{dirac-limit}), $Y_t(\psi)$ is a 
well-defined element of $\Sc_{-p}(\R^d)$ for any $p > 
\frac{d}{4}$. Existence of a bound of $Y_t(\psi)$ independent of $t$ follows 
from the bound on $\delta_x, x \in \R^d$.\\
Since $\{X(t,x)\}$ is $(\F_t)$ adapted for each $x$, so is 
$\{Y_t(\psi)\}$. Since $\{X(t,x)\}$ has 
continuous paths for each $x \in \R^d$, by Proposition 
\ref{delta-x-continuity}, $\{\delta_{X(t,x)}\}$ also has continuous paths in 
$\Sc_{-p}(\R^d)$ for each $x \in \R^d$. Continuity of 
$\{Y_t(\psi)\}$ follows from the Dominated Convergence theorem.
\end{proof}

For 
any $t\geq 0, \omega\in \Omega\setminus\mathcal{N}$, $x \mapsto 
X(t,x,\omega)$ is affine and hence the map $x 
\mapsto 
\phi(X(t,x,\omega))$ is in $\Sc(\R^d)$ whenever $\phi \in \Sc(\R^d)$. Consider 
the linear map $X_t(\omega): \Sc(\R^d) \to \Sc(\R^d)$ defined by 
$(X_t(\omega)\phi)(x):=\phi(X(t,x,\omega)),\, x \in \R^d$. The following 
property of the map $X_t(\omega)$ will be used in our analysis.
\begin{lem}\label{composition-defined}
Fix any $t\geq 0, \omega\in \Omega\setminus\mathcal{N}$. The linear map 
$X_t(\omega): \Sc(\R^d) \to \Sc(\R^d)$ is continuous.
\end{lem}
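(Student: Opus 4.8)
The plan is to use the explicit affine description of the flow furnished by Lemma \ref{soln-form} in order to reduce the assertion to two elementary facts about the Schwartz space: that translations and invertible linear substitutions act as continuous linear operators on $\Sc(\R^d)$. Fix $t$ and $\omega\in\Omega\setminus\mathcal{N}$. By Lemma \ref{soln-form} we may write $X(t,x,\omega) = v + Mx$ for all $x\in\R^d$, where $v := X(t,0,\omega)\in\R^d$ and $M := e^{tC}$ is an invertible $d\times d$ matrix with inverse $e^{-tC}$. Denoting by $\tau_a$ the translation $(\tau_a\phi)(y)=\phi(y-a)$ and by $A_M$ the substitution $(A_M\phi)(x)=\phi(Mx)$, one checks directly that $(X_t(\omega)\phi)(x)=\phi(v+Mx)=(A_M(\tau_{-v}\phi))(x)$, so that $X_t(\omega)=A_M\circ\tau_{-v}$ as maps $\Sc(\R^d)\to\Sc(\R^d)$. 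Since a composition of continuous linear maps is continuous, it then suffices to prove that $\tau_{-v}$ and $A_M$ are continuous on $\Sc(\R^d)$.

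For this I would work with the usual seminorms $q_N(\phi):=\sup_{|\beta|\le N}\sup_{x\in\R^d}(1+|x|^2)^N\,|\partial^\beta\phi(x)|$, $N\in\N$ (multi-index notation), which generate the topology of $\Sc(\R^d)$. The continuity of $\tau_{-v}$ is immediate, since $\partial^\beta(\tau_{-v}\phi)(x)=(\partial^\beta\phi)(x+v)$ and $(1+|x|^2)^N\le C_{N,v}\,(1+|x+v|^2)^N$ for a constant depending only on $N$ and $v$, whence $q_N(\tau_{-v}\phi)\le C_{N,v}\,q_N(\phi)$. For $A_M$, an induction on $|\beta|$ using the chain rule gives $\partial^\beta(A_M\phi)(x)=\sum_{|\gamma|=|\beta|}P_{\beta\gamma}(M)\,(\partial^\gamma\phi)(Mx)$, where each $P_{\beta\gamma}$ is a polynomial in the entries of $M$; moreover $|x|\le\|M^{-1}\|\,|Mx|$ because $M$ is invertible, so $(1+|x|^2)^N\le C_{N,M}\,(1+|Mx|^2)^N$. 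Combining these two bounds yields $q_N(A_M\phi)\le C'_{N,M}\,q_N(\phi)$, which is the desired continuity, and the lemma follows.

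I do not anticipate a genuine obstacle here; the only point requiring a little care is the chain-rule identity for $\partial^\beta(A_M\phi)$, which is the (easy) affine special case of the Faà di Bruno formula and is dispatched by the induction mentioned above. As an alternative one could simply invoke the closed graph theorem, $\Sc(\R^d)$ being a Fréchet space, but the explicit seminorm bounds above seem preferable: they are equally short and are quantitatively convenient for the constructions in the rest of this section, in particular for passing to transposes on $\Sc'(\R^d)$.
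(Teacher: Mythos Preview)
Your proposal is correct and follows essentially the same approach as the paper: the paper's proof is a two-line sketch noting that the map $x\mapsto X(t,x,\omega)$ is affine and that this allows one to control the standard sup-seminorms generating the topology of $\Sc(\R^d)$, and you have simply supplied those estimates in full (via the factorization $X_t(\omega)=A_M\circ\tau_{-v}$ with $M=e^{tC}$). No further comment is needed.
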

\begin{proof}
Certain seminorms given by supremums on $\R^d$ generates the topology on 
$\Sc(\R^d)$ (see 
\cite[Chapter 1.3]{MR771478}). Since $x \mapsto 
X(t,x,\omega)$ is affine, we can establish necessary estimates in terms of the 
said seminorms.
\end{proof}
Let 
$X^{\ast}_t(\omega):\Sc'(\R^d)\to \Sc'(\R^d)$ denote the transpose of the map 
$X_t(\omega)$. Then for any $\theta \in \Sc'(\R^d)$,
\[\inpr{X^{\ast}_t(\theta)}{\phi} = \inpr{\theta}{X_t(\phi)},\, 
\forall \phi \in \Sc(\R^d).\]
Using \eqref{defn-Y}, for any $\phi \in \Sc(\R^d),\psi \in \Lone(\R^d)$ we have
\[\inpr{Y_t(\psi)}{\phi} = \int_{\R^d}\psi(x)\,\phi(X(t,x))\, 
dx=\int_{\R^d}\psi(x)\,(X_t(\phi))(x)\, dx=\inpr{\psi}{X_t(\phi)}.\]
This implies
\begin{equation}\label{YasX*}
Y_t(\psi) = X^{\ast}_t(\psi).
\end{equation}
Consider the derivative maps $\partial_i:\Sc(\R^d)\to
\Sc(\R^d)$ for $i=1,\cdots,d$ as in \cite{MR2373102}. On $\Sc(\R^d)$ consider 
the multiplication operators $M_i, 
i=1,\cdots,d$ 
defined by $(M_i\phi)(x):= x_i\phi(x),\, \phi \in \Sc(\R^d), x=(x_1,\cdots,x_d) 
\in 
\R^d$. By duality these operators can be extended to 
$M_i:\Sc'(\R^d)\to\Sc'(\R^d)$.\\
Let $\{e_1,\cdots,e_d\}$ be the standard basis vectors in 
$\R^d$. Let $n 
=
(n_1,\cdots,n_d) \in {\mathbb Z}_+^d := \{n= (n_1,\cdots, 
n_d) : n_i\,\text{are 
non-negative integers}\}$. Let $h_n, n \in \mathbb{Z}^d_+$ be 
the Hermite functions (see 
\cite[Chapter 1.3]{MR771478}). Note that $\{(2|n|+d)^{-p} h_n: n \in 
\mathbb{Z}^d_+\}$ is an orthonormal basis for $\Sc_p(\R^d)$, where $|n| = 
n_1+\cdots+n_d$. We also have (see
\cite[Appendix A.5, equation (A.26)]{MR562914})
\[\partial_i h_n =
\sqrt{\frac{n_i}{2}}h_{n-e_i}-\sqrt{\frac{n_i+1}{2}}h_{n+e_i}\]
and
\[x_i h_n(x) = \sqrt{\frac{n_i+1}{2}}h_{n+e_i}(x) 
+\sqrt{\frac{n_i}{2}}h_{n-e_i}(x),\]
with the convention that for a multi-index $n = (n_1,\cdots,n_d)$, if $n_i
< 0$ for some $i$, then $h_n \equiv 0$. Above recurrence implies that
$\partial_i,M_i:\Sc_{p}(\R^d)\to\Sc_{p-\frac{1}{2}}(\R^d)$ are bounded linear 
operators, 
for any $p \in 
\R$.\\
The operators $A,L$ are given as follows:
for $\phi \in 
\Sc(\R^d)$ and $x \in \R^d$,
\begin{equation}
\begin{cases}
A\phi := (A_1\phi,\cdots,A_r\phi),\\
A_i\phi(x) := \sum_{k=1}^d \sigma_{ki}(x)\partial_k\phi(x),\\
L\phi(x) := \frac{1}{2}\sum_{i,j=1}^d 
(\sigma\sigma^t)_{ij}(x)\partial^2_{ij}\phi(x)+\sum_{i=1}^d
b_i(x)\partial_i\phi(x),
\end{cases}
\end{equation}
where $\sigma^t$ denotes the transpose of $\sigma$. For $\psi \in 
\Sc'(\R^d)$ define the adjoint 
operators $A^{\ast}, L^\ast$ as follows.
\begin{equation}
\begin{cases}
A^{\ast}\psi := (A_1^{\ast}\psi,\cdots,A_r^{\ast}\psi),\\
A_i^{\ast}\psi := -\sum_{k=1}^d \partial_k\left(\sigma_{ki}\psi\right),\\
L^{\ast}\psi := 
\frac{1}{2}\sum_{i,j=1}^d \partial^2_{ij}\left((\sigma\sigma^t)_{ij}
\psi\right)-\sum_{i=1}^d
\partial_i\left(b_i\psi\right).
\end{cases}
\end{equation}
We now look at $A^{\ast},L^{\ast}$ as operators on $\Sc_p(\R^d)$.
\begin{thm} \label{AL-star-bnds}
Fix $p \in \R$. There exist constants $C_1=C_1(p),C_2=C_2(p) > 0$ such that
\[\|A^{\ast}_i\theta\|_{p-\frac{1}{2}} \leq 
C_1\|\theta\|_p,\,\|L^{\ast}\theta\|_{p-1} \leq 
C_2\|\theta\|_p,\, \forall \theta \in \Sc_{p}(\R^d).\]
Furthermore, we have the Monotonicity inequality for $(A^{\ast},L^{\ast})$, 
i.e. there exists a constant 
$C_p>0$ such that
\[2\inpr[p]{\theta}{L^{\ast}\theta}+\|A^{\ast}\theta\|_{HS(p)}^2 \leq 
C_p\|\theta\|_p^2,\, 
\forall \theta \in \Sc_{p+1}(\R^d),\]
where $\|A^{\ast}\theta\|_{HS(p)}^2:=\sum_{i=1}^d \|A^{\ast}_i\theta\|_{p}^2$.
\end{thm}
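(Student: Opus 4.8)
The plan is to establish the two operator bounds first — these are essentially mechanical given the Hermite recurrences — and then the Monotonicity inequality, which requires the bulk of the work. For the bounds I would use the affine structure of the coefficients: since $\sigma$ is a constant matrix and $b(x)=\alpha+Cx$, one has $A_i^{\ast}\theta=-\sum_k\sigma_{ki}\,\partial_k\theta$ and $L^{\ast}\theta=\tfrac12\sum_{j,k}(\sigma\sigma^t)_{jk}\,\partial_j\partial_k\theta-\sum_j\alpha_j\,\partial_j\theta-\sum_{j,k}c_{jk}\,\partial_j M_k\theta$, so both operators are finite linear combinations of $\partial_i$, $\partial_i\partial_j$ and $\partial_i M_j$. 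Since the recurrences recorded above show that $\partial_i,M_i\colon\Sc_q(\R^d)\to\Sc_{q-1/2}(\R^d)$ are bounded for every real $q$, composing them, applying the triangle inequality, and using $\|\cdot\|_{q-1}\le\|\cdot\|_{q-1/2}$ yields constants $C_1(p),C_2(p)$ with $\|A_i^{\ast}\theta\|_{p-1/2}\le C_1\|\theta\|_p$ and $\|L^{\ast}\theta\|_{p-1}\le C_2\|\theta\|_p$.

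For the Monotonicity inequality I would reduce to the case $p=0$ by conjugating with the operator $\mathcal H$ defined by $\mathcal H h_n=(2|n|+d)h_n$, which satisfies $\|u\|_p=\|\mathcal H^p u\|_0$ and $\inpr[p]{u}{v}=\inpr[0]{\mathcal H^p u}{\mathcal H^p v}$ for $u,v\in\Sc_p(\R^d)$. Writing $\phi=\mathcal H^p\theta$ for $\theta\in\Sc_{p+1}(\R^d)$, the left side of the inequality equals $2\inpr[0]{\phi}{\mathcal H^p L^{\ast}\mathcal H^{-p}\phi}+\sum_i\|\mathcal H^p A_i^{\ast}\mathcal H^{-p}\phi\|_0^2$. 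Introducing the ladder operators $a_i^{-}h_n=\sqrt{n_i}\,h_{n-e_i}$ and $a_i^{+}h_n=\sqrt{n_i+1}\,h_{n+e_i}$, the recurrences above read $\partial_i=\tfrac1{\sqrt2}(a_i^{-}-a_i^{+})$, $M_i=\tfrac1{\sqrt2}(a_i^{-}+a_i^{+})$, and $\mathcal H a_i^{\pm}=a_i^{\pm}(\mathcal H\pm2)$; hence $\mathcal H^p a_i^{\pm}\mathcal H^{-p}=a_i^{\pm}(I\pm2\mathcal H^{-1})^p$, and since $(1\pm2\lambda^{-1})^p-1=O(\lambda^{-1})$ uniformly for $\lambda\ge d$, one obtains $\mathcal H^p\partial_i\mathcal H^{-p}=\partial_i+R_i$ and $\mathcal H^p M_i\mathcal H^{-p}=M_i+S_i$ with $R_i,S_i$ bounded $\Sc_q(\R^d)\to\Sc_{q+1/2}(\R^d)$ for every $q$ (``half-order smoothing''). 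Substituting these into the expressions for $A_i^{\ast},L^{\ast}$ and multiplying out shows $\mathcal H^p L^{\ast}\mathcal H^{-p}=L^{\ast}+T$ with $T$ bounded on $\Sc_0(\R^d)$, and $\mathcal H^p A_i^{\ast}\mathcal H^{-p}=A_i^{\ast}+U_i$ with $U_i=-\sum_k\sigma_{ki}R_k$ bounded $\Sc_q(\R^d)\to\Sc_{q+1/2}(\R^d)$.

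The exact cancellation at $p=0$ comes from integration by parts, carried out first for $\phi\in\Sc(\R^d)$ and then extended to $\Sc_1(\R^d)$ by density together with the bounds above: $2\inpr[0]{\phi}{L^{\ast}\phi}+\|A^{\ast}\phi\|_{HS(0)}^2=-tr(C)\,\|\phi\|_0^2$. Indeed the second-order part of $2L^{\ast}$ contributes $-\sum_{j,k}(\sigma\sigma^t)_{jk}\inpr[0]{\partial_j\phi}{\partial_k\phi}$, which is exactly $-\|A^{\ast}\phi\|_{HS(0)}^2$; the constant drift $\alpha$ contributes $0$ (a perfect derivative), and the linear drift contributes $-tr(C)\|\phi\|_0^2$. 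Substituting $\phi=\mathcal H^p\theta$ into the conjugated expression and expanding, the discrepancy from this identity consists of $2\inpr[0]{\phi}{T\phi}$, $\sum_i\|U_i\phi\|_0^2$, and the cross term $2\sum_i\inpr[0]{A_i^{\ast}\phi}{U_i\phi}$; the first two are bounded by a constant times $\|\phi\|_0^2=\|\theta\|_p^2$ because $T$ and the $U_i$ are bounded on $\Sc_0(\R^d)$.

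I expect the cross term to be the main obstacle, since $\|A_i^{\ast}\phi\|_0$ is \emph{not} controlled by $\|\phi\|_0$. The device I would use is that $A_i^{\ast}=-\sum_k\sigma_{ki}\partial_k$ is skew-adjoint on $\Sc_0(\R^d)$ (because $\partial_k^{\ast}=-\partial_k$), so $\inpr[0]{A_i^{\ast}\phi}{U_i\phi}=-\inpr[0]{\phi}{A_i^{\ast}U_i\phi}$, and $A_i^{\ast}U_i$ \emph{is} bounded on $\Sc_0(\R^d)$: $U_i$ gains precisely the half-derivative that $A_i^{\ast}$ loses, so the composition has order $0$. Hence $|\inpr[0]{A_i^{\ast}\phi}{U_i\phi}|$ is bounded by a constant times $\|\phi\|_0^2$, and collecting all contributions yields the Monotonicity inequality with a constant $C_p$ controlled by $|tr(C)|$ and the $\Sc_0(\R^d)$-operator norms of $T$, the $U_i$, and the $A_i^{\ast}U_i$. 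The only other non-mechanical point is checking that the conjugation errors $R_i,S_i$ are genuinely half-order smoothing, which is the short ladder-operator estimate indicated above; for the coefficients considered here the whole inequality can alternatively be quoted from \cite[Theorems 4.2 and 4.6]{dohs}.
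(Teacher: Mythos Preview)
Your argument for the operator bounds is exactly the paper's: reduce to the boundedness of $\partial_i,M_i:\Sc_q(\R^d)\to\Sc_{q-1/2}(\R^d)$ via the Hermite recurrences and compose. For the Monotonicity inequality the paper gives no argument at all --- it simply invokes \cite[Theorems~4.2 and~4.6]{dohs} --- whereas you sketch the actual proof (conjugation by $\mathcal H^p$, ladder-operator commutation, half-order smoothing of the remainders, exact cancellation at $p=0$ via integration by parts, and skew-adjointness of $A_i^{\ast}$ to tame the cross term). Your sketch is correct, including the identity $2\inpr[0]{\phi}{L^{\ast}\phi}+\|A^{\ast}\phi\|_{HS(0)}^2=-tr(C)\,\|\phi\|_0^2$ and the order-counting that makes $T$, $U_i$, and $A_i^{\ast}U_i$ bounded on $\Sc_0(\R^d)$; this is the standard line of argument for such inequalities in Hermite--Sobolev spaces and is presumably what \cite{dohs} does, so you are not taking a different route so much as unpacking the citation. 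Since you yourself note the inequality can be quoted from \cite{dohs}, your proposal strictly contains the paper's proof.
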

\begin{proof}
For any $q \in \R$, $\partial_i, M_i: \Sc_{q}(\R^d)\to 
\Sc_{q-\frac{1}{2}}(\R^d)$ 
are bounded linear 
operators. Using the 
definitions of $A^{\ast}$ and $L^{\ast}$, 
estimates on the norms 
follows.\\
Proof of the Monotonicity inequality for $(A^{\ast},L^{\ast})$ follows from 
\cite[Theorem 4.2 and Theorem 4.6]{dohs}.
\end{proof}

For $p > \frac{d}{4}$, let $R=R(p)$ be as in Proposition \ref{dirac-limit}. 
Then
\begin{equation}\label{Yt-sq-intg}
\Exp \, \|Y_t(\psi)\|_{-p}^2 \leq R^2 \left(\int_{\R^d} 
|\psi(x)| \,dx\right)^2< \infty.
\end{equation}

Using the norm estimates on $A^\ast_i, i=1,\cdots,d$ in the previous theorem 
and equation \eqref{Yt-sq-intg} the next result can be proved.
\begin{propn}\label{AY.B-is-mtgle}
Let $p > \frac{d}{4}$. Then $\{\int_0^t A^{\ast}(Y_s(\psi))\,.dB_s\}$ is an 
$(\F_t)$ adapted $\Sc_{-p-\frac{1}{2}}(\R^d)$ valued continuous martingale.
\end{propn}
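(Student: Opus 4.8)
I would prove Proposition \ref{AY.B-is-mtgle} by realizing the object in question as a finite sum of Hilbert space valued It\^o integrals, each taken against a scalar Brownian motion, and then checking the standard adaptedness and $L^2$-integrability hypotheses required for such integrals. Concretely, since $A^{\ast}=(A^{\ast}_1,\dots,A^{\ast}_d)$ (recall $r=d$), we write
\[
\int_0^t A^{\ast}(Y_s(\psi))\,.dB_s \;=\; \sum_{i=1}^{d}\int_0^t A^{\ast}_i(Y_s(\psi))\, dB^{(i)}_s ,
\]
and it suffices to show that each summand is an $(\F_t)$ adapted, continuous, square integrable $\Sc_{-p-\frac12}(\R^d)$ valued martingale, after which the (finite) sum inherits all these properties.

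First I would record the two structural facts about the integrand. The process $\{Y_s(\psi)\}$ is $(\F_t)$ adapted, $\Sc_{-p}(\R^d)$ valued and path-continuous (by the preceding proposition), and by Theorem \ref{AL-star-bnds} applied with the parameter there equal to $-p$, each $A^{\ast}_i\colon \Sc_{-p}(\R^d)\to \Sc_{-p-\frac12}(\R^d)$ is a bounded linear operator with $\|A^{\ast}_i\theta\|_{-p-\frac12}\le C_1\|\theta\|_{-p}$. Composing, the map $s\mapsto A^{\ast}_i(Y_s(\psi))$ is an $(\F_t)$ adapted $\Sc_{-p-\frac12}(\R^d)$ valued process with continuous paths, hence progressively measurable. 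For the integrability I would combine the same norm bound with the a priori estimate \eqref{Yt-sq-intg}: for every $T>0$ and every $i$,
\[
\Exp \int_0^T \|A^{\ast}_i(Y_s(\psi))\|_{-p-\frac12}^2\, ds \;\le\; C_1^2 \int_0^T \Exp\, \|Y_s(\psi)\|_{-p}^2\, ds \;\le\; C_1^2\, T\, R^2 \left(\int_{\R^d}|\psi(x)|\,dx\right)^2 < \infty .
\]
Thus $A^{\ast}_i(Y_{\cdot}(\psi))$ belongs to the class of integrands for which the $\Sc_{-p-\frac12}(\R^d)$ valued It\^o integral with respect to $\{B^{(i)}_t\}$ is well defined.

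It then remains only to invoke the standard construction of the Hilbert space valued stochastic integral against a one dimensional Brownian motion (see, e.g., \cite{MR2373102} and the references therein), which yields that each $\{\int_0^t A^{\ast}_i(Y_s(\psi))\, dB^{(i)}_s\}$ is an $(\F_t)$ adapted, continuous, square integrable $\Sc_{-p-\frac12}(\R^d)$ valued martingale; summing over $i=1,\dots,d$ preserves adaptedness, path continuity and the martingale property, giving the statement. I do not expect any genuine obstacle here: the real content is entirely carried by the operator norm bound of Theorem \ref{AL-star-bnds} and the estimate \eqref{Yt-sq-intg}, and the only mildly delicate point is the progressive measurability of the composed integrand, which follows from its path continuity and adaptedness as noted above.
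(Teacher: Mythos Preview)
Your proof is correct and follows exactly the route the paper indicates: the paper states (without writing out details) that the proposition follows from the norm estimates on $A^\ast_i$ in Theorem~\ref{AL-star-bnds} together with the bound \eqref{Yt-sq-intg}, and you have carried out precisely this argument. Your additional remarks on adaptedness, path continuity, and progressive measurability of the integrand are appropriate and fill in the routine steps the paper leaves implicit.
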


Next result is analogous to \cite[Theorem 
3.3]{MR2373102}, except the uniqueness part.
\begin{thm}\label{SDE-Y}
Let $p > \frac{d}{4}$ and $\psi \in \Lone(\R^d)$. Then the 
$\Sc_{-p}(\R^d)$ valued 
continuous adapted process $\{Y_t(\psi)\}$ satisfies the 
following equation in $\Sc_{-p-1}(\R^d)$, a.s.
\begin{equation}\label{Astar-Lstar-eqn}
Y_t(\psi) = \psi + \int_0^t A^{\ast}(Y_s(\psi))\,.dB_s + 
\int_0^t L^{\ast}(Y_s(\psi))\,ds, \, \forall t \geq 0.
\end{equation}
This solution is also unique.
\end{thm}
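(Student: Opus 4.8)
The plan is to split the statement into two parts: first establishing that $\{Y_t(\psi)\}$ satisfies the SPDE \eqref{Astar-Lstar-eqn}, and then proving uniqueness via the Monotonicity inequality from Theorem \ref{AL-star-bnds}.

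For the existence part, the natural route is to reduce to the known case $\psi \in \E'(\R^d)$. For $\psi \in \Sc(\R^d)$, write $\psi = \int_{\R^d} \psi(x)\delta_x\,dx$ (the representation from the proof of Lemma \ref{Lone-and-Scp}), so that $Y_t(\psi) = \int_{\R^d} \psi(x)\,\delta_{X(t,x)}\,dx$. Each $\delta_x \in \E'(\R^d)$, and by \cite[equation (3.7)]{MR2373102} the process $\{Y_t(\delta_x)\} = \{\delta_{X(t,x)}\}$ satisfies
\[
\delta_{X(t,x)} = \delta_x + \int_0^t A^{\ast}(\delta_{X(s,x)})\,.dB_s + \int_0^t L^{\ast}(\delta_{X(s,x)})\,ds
\]
in some $\Sc_{-q}(\R^d)$, with $q$ that can be taken uniform in $x$ (since $\{\delta_x\}$ is norm-bounded, Proposition \ref{dirac-limit}). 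The key step is then to integrate this identity against $\psi(x)\,dx$ and interchange the $dx$-integral with the stochastic and Lebesgue integrals in time. For the Lebesgue integral this is a Fubini argument using the bound $\|L^{\ast}(\delta_{X(s,x)})\|_{-q-1} \le C_2 \|\delta_{X(s,x)}\|_{-q} \le C_2 R$ from Theorem \ref{AL-star-bnds} and Proposition \ref{dirac-limit}. For the stochastic integral one uses a stochastic Fubini theorem in the Hilbert space $\Sc_{-q-1/2}(\R^d)$, whose hypotheses are met because $\int_{\R^d} |\psi(x)|\, \Exp\big(\int_0^t \|A^{\ast}(\delta_{X(s,x)})\|_{-q-1/2}^2\,ds\big)^{1/2}\,dx \le C_1 R\, t^{1/2} \|\psi\|_{\Lone} < \infty$. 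Using \eqref{defn-Y} and the linearity of $A^\ast, L^\ast$ to pull the integral back inside, we recover \eqref{Astar-Lstar-eqn} for $\psi \in \Lone(\R^d)$; the process is $\Sc_{-p}(\R^d)$-valued and the equation holds in $\Sc_{-p-1}(\R^d)$ for any $p > d/4$ after adjusting indices.

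For the uniqueness part, suppose $\{Z_t\}$ is another $\Sc_{-p}(\R^d)$-valued continuous adapted solution of \eqref{Astar-Lstar-eqn} with $Z_0 = \psi$, and set $W_t := Y_t(\psi) - Z_t$, which solves the same equation with zero initial condition:
\[
W_t = \int_0^t A^{\ast}(W_s)\,.dB_s + \int_0^t L^{\ast}(W_s)\,ds.
\]
Here one applies the Monotonicity inequality exactly as in \cite{MR2373102}: by It\^o's formula for the Hilbert-space norm $\|\cdot\|_{-p-1}$ (noting $W_s \in \Sc_{-p}(\R^d) = \Sc_{(-p-1)+1}(\R^d)$, so the inequality applies at level $-p-1$),
\[
\|W_t\|_{-p-1}^2 = \int_0^t \Big(2\inpr[-p-1]{W_s}{L^{\ast}W_s} + \|A^{\ast}W_s\|_{HS(-p-1)}^2\Big)\,ds + \text{(martingale)},
\]
and the integrand is bounded by $C_{-p-1}\|W_s\|_{-p-1}^2$. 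Taking expectations kills the martingale term, and Gronwall's inequality gives $\Exp\|W_t\|_{-p-1}^2 = 0$ for all $t$, hence $W_t = 0$ a.s. for each $t$, and by continuity $W \equiv 0$. The main obstacle is the existence part, specifically justifying the stochastic Fubini interchange and verifying that the Hermite-Sobolev index $q$ in the $\E'$-valued equation for $\{\delta_{X(t,x)}\}$ can be chosen independently of $x$ so that the interchange produces a genuine $\Sc_{-p}(\R^d)$-valued identity; the uniqueness part is essentially a direct transcription of the argument already indicated in \cite{MR2373102}, using Theorem \ref{AL-star-bnds} as a black box.
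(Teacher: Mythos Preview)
Your uniqueness argument is identical to the paper's: apply It\^o's formula for $\|\cdot\|_{-p-1}^2$ to the difference of two solutions, bound the drift term via the Monotonicity inequality from Theorem~\ref{AL-star-bnds}, and conclude with Gronwall. Nothing more needs to be said there.

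For existence, your approach is correct but genuinely different from the paper's. You argue by \emph{superposition}: invoke the known equation from \cite[equation~(3.7)]{MR2373102} for each $\delta_x\in\E'(\R^d)$, integrate in $x$ against $\psi\in\Lone(\R^d)$, and justify the interchange of $dx$-integration with the $dB_s$- and $ds$-integrals by a Hilbert-space stochastic Fubini theorem together with the uniform bound $\|\delta_x\|_{-p}\le R$. The paper instead argues by \emph{duality through test functions}: it applies the scalar It\^o formula to $\phi(X(t,x))$ for each $\phi\in\Sc(\R^d)$, uses the affine form of $x\mapsto X(t,x,\omega)$ (via Lemma~\ref{composition-defined}) to show that the resulting identity \eqref{Ito-formula-implication} actually lives in $\Sc(\R^d)$ as a function of $x$, and then pairs against $\psi$ using $Y_t(\psi)=X_t^\ast(\psi)$ from \eqref{YasX*}. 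Your route is more modular---it treats \cite{MR2373102} as a black box and would extend to any situation where the $\delta_x$-equations hold with uniform-in-$x$ norm control---but it carries the overhead of a stochastic Fubini theorem (and the joint measurability/predictability check for $(x,s,\omega)\mapsto A^\ast\delta_{X(s,x)}$ that such a theorem requires). The paper's route avoids Fubini entirely by working on the test-function side, at the cost of using the affine structure of the flow more explicitly. One small slip: you write ``For $\psi\in\Sc(\R^d)$'' when you mean $\psi\in\Lone(\R^d)$; the representation $\psi=\int\psi(x)\delta_x\,dx$ and the rest of your argument only need integrability.
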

\begin{proof}
By It\={o}'s formula for any $\phi \in \Sc(\R^d)$, and any $x \in \R^d$ we have
\begin{equation}\label{Ito-formula-implication}
(X_t(\phi))(x)= \phi(x)+\int_0^t (X_s(A\phi))(x).\,dB_s+\int_0^t 
(X_s(L\phi))(x)\,ds.
\end{equation}
Since $L\phi \in \Sc(\R^d)$, $\{x\mapsto 
(X_t(L\phi))(x)\}$ is an $\Sc(\R^d)$ valued process. Using differentiation 
under the sign of integration we can establish the existence of all derivatives 
of $x \mapsto \int_0^t 
(X_s(L\phi))(x)\,ds$ for any $t \geq 0$. Given non-negative 
integers $N, \alpha_1,\cdots, \alpha_d$, the terms 
\[\sup_{x\in\R^d}(1+|x|^2)^N|\partial_1^{\alpha_1}\cdots\partial_d^{\alpha_d}
(X_s(L\phi))(x)|, s \in [0,t]\]
can be dominated uniformly in $s$ and hence $\{x\mapsto\int_0^t 
(X_s(L\phi))(x)\,ds\}$ is an $\Sc(\R^d)$ valued process. From 
\eqref{Ito-formula-implication}, we conclude that $\{x\mapsto\int_0^t 
(X_s(A\phi))(x).\,dB_s\}$ is an $\Sc(\R^d)$ valued 
process, since the other terms are so. Then for $\phi \in \Sc(\R^d)$, using 
\eqref{YasX*} we can show a.s. $t \geq 0$
\[\inpr{Y_t(\psi)}{\phi} =\inpr{\psi+\int_0^t 
A^{\ast}Y_s(\psi).\,dB_s+\int_0^t 
L^{\ast}Y_s(\psi)\,ds}{\phi}.\]
The argument for above equality is similar to \cite[Theorem 
3.3]{MR2373102}. This proves $Y_t(\psi)$ solves 
\eqref{Astar-Lstar-eqn} in $\Sc_{-p-1}(\R^d)$.\\
To prove the uniqueness, let $\{Y^1_t\}, \{Y^2_t\}$ be two 
solutions of \eqref{Astar-Lstar-eqn} and let 
$Z_t:=Y^1_t-Y^2_t, t \geq 0$. Then using It\={o} formula for 
$\|\cdot\|^2_{-p-1}$ we obtain 
a.s. for all $t \geq 0$
\[\|Z_t\|^2_{-p-1} = \int_0^t 
\left[2\inpr[-p-1]{Z_s}{L^{\ast}Z_s}+\sum_{i=1}^d\|A^{\ast}_iZ_s\|_{-p-1}
^2\right] ds + M_t,\]
where $\{M_t\}$ is some continuous local martingale with $M_0=0$. Now using the 
Monotonicity inequality (Theorem \ref{AL-star-bnds}) and the Gronwall's 
inequality we can show $ Z_t = 0, t \geq 0$ a.s., 
which shows a.s. $Y^1_t = Y^2_t, t \geq 0$. The argument is similar to 
\cite[Lemma 3.6]{MR3063763}.
\end{proof}

We now prove the existence and uniqueness of 
solution to \eqref{Lstar-ivp} with initial condition $\psi \in \Lone(\R^d)$. 
This result is analogous to \cite[Theorems 4.3, 4.4]{MR2373102}.
\begin{thm}\label{ivp-soln}
Let $p > \frac{d}{4}$ and $\psi \in \Lone(\R^d)$. Then $\psi(t):=\Exp\, 
Y_t(\psi)$ solves the 
initial value problem 
\eqref{Lstar-ivp}, i.e.
\[\Exp\, Y_t(\psi) = \psi + \int_0^t 
L^{\ast}\,\Exp \,Y_s(\psi)\,ds\]
holds in $\Sc_{-p-1}(\R^d)$. Furthermore this is the unique solution.
\end{thm}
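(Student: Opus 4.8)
The plan is to deduce both existence and uniqueness for \eqref{Lstar-ivp} directly from Theorem \ref{SDE-Y}, by taking expectations. For existence, I start from the SPDE \eqref{Astar-Lstar-eqn} satisfied by $\{Y_t(\psi)\}$ in $\Sc_{-p-1}(\R^d)$. By Proposition \ref{AY.B-is-mtgle}, the stochastic integral $\int_0^t A^\ast(Y_s(\psi))\,.dB_s$ is a continuous $\Sc_{-p-\frac12}(\R^d)$-valued martingale starting at $0$, hence has zero expectation as an element of $\Sc_{-p-1}(\R^d)$. So applying $\Exp$ to \eqref{Astar-Lstar-eqn} kills the martingale term and leaves $\Exp\,Y_t(\psi) = \psi + \Exp\!\int_0^t L^\ast(Y_s(\psi))\,ds$. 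I then need to interchange $\Exp$ with both the time integral and the bounded linear operator $L^\ast:\Sc_{-p}(\R^d)\to\Sc_{-p-1}(\R^d)$ from Theorem \ref{AL-star-bnds}. The interchange is justified by a Bochner-integral / Fubini argument: by \eqref{Yt-sq-intg} we have $\sup_{s\le t}\Exp\|Y_s(\psi)\|_{-p}^2 < \infty$, so $\Exp\int_0^t\|L^\ast Y_s(\psi)\|_{-p-1}\,ds \le C_2\,\Exp\int_0^t\|Y_s(\psi)\|_{-p}\,ds < \infty$, which legitimizes pulling $\Exp$ inside and commuting it with the continuous linear map $L^\ast$. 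This yields $\Exp\,Y_t(\psi) = \psi + \int_0^t L^\ast\,\Exp\,Y_s(\psi)\,ds$ in $\Sc_{-p-1}(\R^d)$, i.e.\ $\psi(t):=\Exp\,Y_t(\psi)$ solves \eqref{Lstar-ivp}; continuity of $t\mapsto\psi(t)$ in $\Sc_{-p-1}(\R^d)$ follows from continuity of $\{Y_t(\psi)\}$ together with the uniform bound \eqref{Yt-sq-intg} and dominated convergence.

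For uniqueness, suppose $\psi^1(t),\psi^2(t)$ are two $\Sc_{-p-1}(\R^d)$-valued (continuous) solutions of \eqref{Lstar-ivp} with the same initial condition $\psi$, and set $z(t):=\psi^1(t)-\psi^2(t)$, so $z(0)=0$ and $z(t)=\int_0^t L^\ast z(s)\,ds$. The standard deterministic energy estimate applies: compute $\frac{d}{dt}\|z(t)\|_{-p-1}^2 = 2\inpr[-p-1]{z(t)}{L^\ast z(t)}$, bound the right side using the Monotonicity inequality of Theorem \ref{AL-star-bnds} (here one needs $z(t)\in\Sc_{-p}(\R^d)$; this holds because we are solving in the scale where the data and solution live in $\Sc_{-p}$, with $L^\ast$ mapping into $\Sc_{-p-1}$), obtaining $\frac{d}{dt}\|z(t)\|_{-p-1}^2 \le C_{-p-1}\|z(t)\|_{-p-1}^2$, and then Gronwall's inequality with $z(0)=0$ forces $z\equiv 0$. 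This is exactly the deterministic shadow of the uniqueness argument in the proof of Theorem \ref{SDE-Y}.

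The main technical point to handle carefully is the functional-analytic bookkeeping of the interchange of expectation with the operator $L^\ast$ and the Bochner integral in $\Sc_{-p-1}(\R^d)$ — in particular checking that $s\mapsto Y_s(\psi)$ and $s\mapsto L^\ast Y_s(\psi)$ are genuinely Bochner integrable on bounded intervals (strong measurability is immediate from continuity of the paths, and the $L^2$-in-$\omega$ bound \eqref{Yt-sq-intg} gives the needed integrability), so that Fubini for Bochner integrals and commutation with the bounded operator $L^\ast$ are legitimate. Once that is in place, everything else is the two routine steps above: expectation kills the martingale (existence) and Monotonicity inequality plus Gronwall (uniqueness). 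I would also remark that a cleaner route to existence is simply to quote Theorem \ref{SDE-Y} and take $\Exp$ of \eqref{Astar-Lstar-eqn}, which is how the paper has set things up; the uniqueness for \eqref{Lstar-ivp} does not follow formally from the uniqueness in Theorem \ref{SDE-Y} but is proved by the same Monotonicity-inequality technique in the deterministic setting, as already signalled in the text preceding the statement.
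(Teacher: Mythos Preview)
Your proposal is correct and follows essentially the same approach as the paper: take expectations in \eqref{Astar-Lstar-eqn}, use Proposition \ref{AY.B-is-mtgle} to kill the martingale term, commute $\Exp$ with $L^\ast$ via boundedness of $L^\ast:\Sc_{-p}(\R^d)\to\Sc_{-p-1}(\R^d)$ (the paper states this interchange explicitly), and then obtain uniqueness from the Monotonicity inequality of Theorem \ref{AL-star-bnds} together with Gronwall. The extra Bochner/Fubini bookkeeping you supply is a welcome elaboration of details the paper leaves implicit, but the strategy is identical.
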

\begin{proof}
The arguments are similar to \cite[Theorems 4.3, 4.4]{MR2373102}. Taking term 
by 
term expectation on 
both sides of \eqref{Astar-Lstar-eqn} and using Proposition 
\ref{AY.B-is-mtgle}, we can 
show $\psi(t):=\Exp\, 
Y_t(\psi)$ solves \eqref{Lstar-ivp} in $\Sc_{-p-1}(\R^d)$. Here $L^{\ast}\Exp 
\,Y_s(\psi) = \Exp \,L^{\ast}
\,Y_s(\psi)$, since $L^\ast$ is a bounded linear operator from 
$\Sc_{-p}(\R^d)$ to $\Sc_{-p-1}(\R^d)$.\\
The proof of uniqueness of the solution is same as in \cite[Theorem 
4.4]{MR2373102}. We use the Monotonicity inequality in 
Theorem \ref{AL-star-bnds} and 
the Gronwall's inequality. \end{proof}

\begin{rem}
In \cite[Theorem 
4.4]{MR2373102} the Monotonicity inequality was in the hypothesis, whereas in 
our case it has been proved.
\end{rem}

The process $\{Y_t(\psi)\}$ can also be described in terms of $\{X(t,0)\}$ 
without using the integral representation \eqref{defn-Y}. We show that the 
tempered 
distribution 
$Y_t(\psi)(\omega)$ is given by an 
integrable function. This representation 
of $\{Y_t(\psi)\}$ is 
similar to the representation obtained in \cite[Lemma 3.6]{MR3063763}, where 
the 
author looked at the solution of stochastic partial differential equations 
governed by certain non-linear operators. \begin{propn}\label{dist-Y}
Let $\psi \in \Lone(\R^d)$ and $\omega \in \mathcal{N}$. Then
\[Y_t(\psi) = e^{-t\,tr(C)}\,\tau_{Z_t}\psi_t(\cdot),\]
where $Z_t := X(t,0), \psi_t(x):=\psi(e^{-tC}x)$ for $t \geq 0, x 
\in 
\R^d$ and $tr(C)$ is the trace of the matrix $C$. 
\end{propn}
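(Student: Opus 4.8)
The plan is to compute the action of $Y_t(\psi)$ on an arbitrary test function $\phi \in \Sc(\R^d)$ directly from the defining formula \eqref{defn-Y}, using the explicit form of the flow $X(t,x) = X(t,0) + e^{tC}x = Z_t + e^{tC}x$ established in Lemma \ref{soln-form}. Fix $\omega \notin \mathcal{N}$ (the statement's hypothesis ``$\omega \in \mathcal{N}$'' should read $\omega \notin \mathcal{N}$, since on the null set $Y_t$ was set to $0$). Starting from $\inpr{Y_t(\psi)}{\phi} = \int_{\R^d} \psi(x)\,\phi(Z_t + e^{tC}x)\, dx$, I would substitute $y = Z_t + e^{tC}x$, so that $x = e^{-tC}(y - Z_t)$ and $dx = |\det e^{-tC}|\, dy = e^{-t\,tr(C)}\, dy$ (using $\det e^{tC} = e^{t\,tr(C)} > 0$). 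This yields
\[
\inpr{Y_t(\psi)}{\phi} = e^{-t\,tr(C)} \int_{\R^d} \psi\bigl(e^{-tC}(y - Z_t)\bigr)\, \phi(y)\, dy.
\]

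Next I would recognize the integrand's $\psi$-factor as a translate of $\psi_t$: since $\psi_t(x) = \psi(e^{-tC}x)$, we have $\psi\bigl(e^{-tC}(y - Z_t)\bigr) = \psi_t(y - Z_t) = (\tau_{Z_t}\psi_t)(y)$, by the definition of the translation operator $\tau_z f(y) = f(y - z)$ recalled in the proof of Proposition \ref{dirac-prpty}. Hence $\inpr{Y_t(\psi)}{\phi} = e^{-t\,tr(C)} \int_{\R^d} (\tau_{Z_t}\psi_t)(y)\, \phi(y)\, dy = \inpr{e^{-t\,tr(C)}\,\tau_{Z_t}\psi_t}{\phi}$, which is exactly the claimed identity, holding for all $\phi \in \Sc(\R^d)$ and therefore as an equality of tempered distributions.

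Two small checks are needed to make this rigorous. First, $\psi_t \in \Lone(\R^d)$ whenever $\psi \in \Lone(\R^d)$: this follows from the same change of variables, $\int |\psi_t(x)|\, dx = \int |\psi(e^{-tC}x)|\, dx = e^{-t\,tr(C)} \int |\psi(u)|\, du < \infty$, so that $\tau_{Z_t}\psi_t$ is again an $\Lone$ function and hence a well-defined element of $\Sc_{-p}(\R^d)$ by Lemma \ref{Lone-and-Scp}; this also justifies that $e^{-t\,tr(C)}\,\tau_{Z_t}\psi_t$ lies in the same space as $Y_t(\psi)$. Second, Fubini/the change-of-variables formula is legitimate because $\psi$ is integrable and $\phi$ is bounded, so the double integral converges absolutely. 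I do not anticipate a serious obstacle here; the only point requiring care is keeping track of the Jacobian factor $e^{-t\,tr(C)}$ and confirming the sign conventions in the translation operator so that the argument $y - Z_t$ (rather than $y + Z_t$) appears. The substitution is the entire content of the proof, and it goes through cleanly precisely because Lemma \ref{soln-form} gives the flow an explicit affine form with the space-independent part $Z_t = X(t,0)$.
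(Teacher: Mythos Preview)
Your proof is correct and follows exactly the same change-of-variables argument as the paper's own proof. One minor slip in a side remark: in your integrability check for $\psi_t$, the Jacobian factor should be $e^{t\,tr(C)}$ rather than $e^{-t\,tr(C)}$ (substituting $u = e^{-tC}x$ gives $dx = |\det e^{tC}|\,du = e^{t\,tr(C)}\,du$), but this does not affect the conclusion that $\psi_t \in \Lone(\R^d)$.
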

\begin{proof}
For any $\phi \in \Sc(\R^d)$, we have
\begin{align*}
\inpr{Y_t(\psi)}{\phi} &= \int_{\R^d} \psi(x) \phi(X(t,x)) \, dx = 
\int_{\R^d} \psi(x) \phi(e^{tC}x + Z_t) \, dx\\
&= |det(e^{-tC})|\int_{\R^d} \psi(e^{-tC}(z-Z_t)) \phi(z) \, dz,\, 
(\text{putting } z = 
e^{tC}x+Z_t)\\
&= e^{-t\,tr(C)} \int_{\R^d} (\tau_{Z_t}\psi_t)(z) \phi(z) \, dz.
\end{align*}
The equality $|det(e^{-tC})| = e^{-t\,tr(C)}$ follows from 
\cite[Problem 5.6.P43]{MR2978290}. Hence $Y_t(\psi) = 
e^{-t\,tr(C)}\,\tau_{Z_t}\psi_t(\cdot)$.
\end{proof}
\begin{rem}
We obtained probabilistic representation of solutions of 
\eqref{Lstar-ivp}, when the initial condition $\psi \in \Lone(\R^d)$ and the 
coefficients $\sigma, b$ are in a specific form. Possible extensions of these 
results 
to the case - when the initial 
condition $\psi \in \mathcal{L}^q(\R^d)$ for some $q > 1$ or more 
generally a finite linear combination of the distributional derivatives of 
$\mathcal{L}^q(\R^d)$ functions ($q \geq 1$) - will be taken up as future work. 
\end{rem}

\noindent\textbf{Acknowledgement:} The author would like to thank Professor B.
Rajeev, Indian Statistical Institute, Bangalore for valuable suggestions during
the work.


\end{document}